\numberwithin{equation}{section}
\newtheorem{teo}{Theorem}[section]
\newtheorem{lemma}[teo]{Lemma}
\newtheorem{pro}[teo]{Proposition}
\newtheorem{de}[teo]{Definition}
\newtheorem{con}[teo]{Condition}
\newtheorem{re}[teo]{Remark}}
\DeclareMathOperator{\EE}{\mathbb{E}}
\DeclareMathOperator{\PP}{\mathbb{P}}
\DeclareMathOperator{\Var}{Var} \DeclareMathOperator{\Cov}{Cov}
\newcommand{\ind}{\mathbf{1}}
\newcommand{\real}{\mathbb{R}}
\newcommand{\integer}{\mathbb{Z}}
\renewcommand{\natural}{\mathbb{N}}
\newcommand{\as}{\emph{a.s. }}
\newcommand{\ie}{\emph{i.e. }}
\newcommand{\iid}{\emph{i.i.d. }}
\newcommand{\eg}{\emph{e.g. }}
\renewcommand{\epsilon}{\varepsilon}
 \title{\bf Propagation of chaos for a General Balls into Bins dynamics \thanks{The present work was financially supported  by  PRIN 20155PAWZB ``Large Scale Random Structures''. }}
\author{Nicoletta Cancrini\footnote{nicoletta.cancrini@univaq.it, DIIIE Università dell'Aquila, L'Aquila Italy.}
  \ and Gustavo Posta\footnote{gustavo.posta@uniroma1.it, Dipartimento di Matematica ``G. Castelnuovo'', Università di Roma ``la Sapienza'', Roma Italy.}
}
\begin{document}

\date{}

\noindent

\maketitle
\thispagestyle{empty}

\begin{abstract}
  Consider $N$ balls initially placed in $L$ bins.
  At each time step take a ball from each non-empty bin and \emph{randomly} reassign \emph{all} the balls into the bins.
  We call this finite Markov chain \emph{General Repeated Balls into Bins} process.
  It is a discrete time conservative interacting particles system with parallel updates.
  Assuming a \emph{quantitative} chaotic condition on the reassignment rule we prove a \emph{quantitative} propagation of chaos for this model.
  We furthermore study some equilibrium properties of the limiting nonlinear process.
\end{abstract}

\section{Introduction}
\label{sec:intro}

In recent years interacting stochastic processes with parallel updates has received an increasing interest in the scientific literature and particularly in the probabilistic one.
Important applications are the \emph{neuronal networks} models see \cite{Ga:Lo}, \cite{DM:Ga:Lo:Pr} and reference therein, \emph{networks of interacting CPUs}, see for example  \cite{Be:Cl:Na:Pa:Po} and \emph{probabilistic cellular automata} dynamics, see \cite{Lou:Na} for an overview.
Parallel updating rules introduce new difficulties in the study of these processes in particular 
in many cases the dynamics is not reversible and the invariant measure is unknown.
This means that the description of the system at equilibrium 
and  the behaviour of a huge number of interacting components is not generally available.
For this reason many authors studied the large scale limit of these systems in the so called \emph{propagation of chaos} framework (see \cite{An:DP:Fi},\cite{Ca:Po} and \cite{Fo:Lo}). Propagation of chaos gives the link between the microscopic and the macroscopic level, in particular it says that the system, in the large scale limit, behaves as the components are independent and each one follows a \emph{non linear dynamics}.
Propagation of chaos is a largely studied subject which dates back to the seminal paper of M. Kac \cite{Ka}, see  \cite{Sz} for an introduction. However, as noted by \cite{Ga:Lo}, it is not clear at all that it holds for systems with parallel updates and only few examples are known to exhibit this property. The fact that parallel jumps may interfere with asymptotic independence makes propagation of chaos for these models an interesting field.
Quantitative estimates on the difference between the original interacting process, when the number of components is large, and the non linear dynamics of the infinite volume limit are needed to evaluate the approximation error.
In \cite{An:DP:Fi} this problem is considered in the $L^1$ framework using the Wasserstein-one distance for systems of interacting diffusions with simultaneous jumps and the authors prove propagation of chaos for a wide class of models. We here study the behaviour of the total variation distance for a  \emph{General Repeated Balls into Bins} process which is not contained in this class and new ideas and techniques, accounting the different features of these processes, are needed.

To define our model consider $N$ balls initially placed in $L$ bins. 
Take a ball from each non-empty bin and \emph{randomly} reassign the balls into the bins, then
iterate independently this procedure at each time step.
The random evolution of the number of balls in each bin is an ergodic finite state Markov chain that we call \emph{General Repeated Balls-into-Bins}  (GRBB) process.
A particular case of the GRBB process is the \emph{Repeated Balls-into-Bins} (RBB) process, in which the balls are uniformly and independently reassigned into bins.
For the RBB process in \cite{Ca:Po} we proved the propagation of chaos in the weak limit sense and without any quantitative estimate.
For the GRBB process the random reassignment has a \emph{general} distribution. 
The systems in this class are conservative interacting particles systems, in discrete time, with parallel updates.
Usually the GRBB process is not reversible and its invariant measure cannot always be calculated.

The GRBB process, as the RBB process, appears naturally in different applicative contexts. 
For example we can think to balls in every bin as customers or tasks in a queue.
Customers are served at discrete times and each served customer is reassigned to a random queue.
In this setting the GRBB process is a discrete time \emph{closed Jackson network} \cite{Ja,Ke}.
The parallel updating is justified, see for example \cite{Be:Cl:Na:Pa:Po}, by thinking to customers as tasks (or \emph{tokens}) in a network of parallel CPUs which are reassigned at every round.

As noted above  we are interested in the behaviour of the GRBB process for large $L$. In physics this large scale limit is known as the \emph{thermodynamic limit}.
Assuming a \emph{quantitative} chaotic condition on the reassignment rule, see Condition~\ref{con:1}, we prove a \emph{quantitative} propagation of chaos.
Quantitative here means that we give the explicit rate of convergence of the empirical measure of the GRBB process to the nonlinear process distribution as $L\to+\infty$, see Theorem~\ref{teo:mai}. We obtain the same rate of \cite{An:DP:Fi} for systems of interacting diffusions with simultaneous jumps. 
The quantitative chaotic condition on the reassignment rule is strong but natural as can be seen as the distance between a \emph{canonical} and the corresponding \emph{grand canonical} measure.
This problem is known in literature as \emph{equivalence of ensembles} in the thermodynamic limit
(see for example \cite{Ca:Ma}).
The existence of this strong quantitative equivalence of ensembles depends on the model and can be a difficult problem. 
In this paper we prove that this condition holds for some natural applications of our model when the number of particles is  proportional to $L$ which is the interesting case in the thermodynamic  limit.
We remark that, as in \cite{An:DP:Fi}, our propagation of chaos result holds in finite time intervals.
However, to our knowledge, uniform in time propagation of chaos for models with parallel updates is actually an unexplored field and except that for some trivial cases, we cannot say anything on the propagation of chaos for the stationary measure.

The paper is organized as follows.
In Section~\ref{sec:cbp} we define the GRBB process and the nonlinear process, we then prove the theorem on quantitative propagation of chaos.
In Section~\ref{sec:app} we apply Theorem~\ref{teo:mai} to three cases of the GRBB process depending on different choices of the reassignment rule: Fermi-Dirac, Maxwell-Boltzmann and Bose-Einstein statistics.
The results of this section are mainly obtained by coupling arguments.
We give here also a mixing time bound for the process with Fermi-Dirac reassignment rule, while the analogous problem for the Maxwell-Boltzmann reassignment rule is contained in \cite{Ca:Po:1}.
In the last section we study the long time behavior of the nonlinear process. 

\section{Construction and main result}
\label{sec:cbp}

We denote with $\integer_+$ the set of the non-negative integers and define $\natural:=\integer_+\setminus\{0\}$. 
For any denumerable set $S$ we denote with $|S|$ its cardinality.
Furthermore we denote with $\mathcal{P}(S)$ the metric space of probability measures on $S$ endowed with the \emph{total variation distance}
\begin{equation*}
  \Vert p-q\Vert
  :=\sup_{A\subseteq S}\big(p(A)-q(A)\big)
  =\frac{1}{2}\sum_{s\in S}\vert p(\{s\})-q(\{s\})\vert.
\end{equation*}
We define the \emph{empirical measure function} $\rho_L\colon\integer_+^L\to\mathcal{P}(\integer_+)$ as
\begin{equation*}
  \rho_L(\xi)
  :=\frac{1}{L}\sum_{x=1}^L\delta_{\xi_x},
\end{equation*}
where $\delta_n$ is the Dirac mass at $n\in\integer_+$, and the map $w^L\colon\integer_+^L\to\integer_+^L$ as
\begin{equation*}
  w^L(\xi)
  :=(\ind(\xi_1>0),\dots,\ind(\xi_L>0)).
\end{equation*}
To keep notation simple in the following we denote with the term \emph{constant} a positive number which does not depend on $L$.
Furthermore, when this does not cause confusion,  we use the same symbol to denote different constants.

\subsection{Propagation of chaos of the GRBB process}
\label{sec:grbb}

We here define the GRBB process and its corresponding \emph{nonlinear} process, which are discrete time Markov chains.

\begin{de}
  \label{de:grb}
  The GRBB process $(\eta^L(t))_{t\geq0}$ is the discrete time Markov chain with values in $\integer_+^L$ defined as follows.
  Assume that, for some $t\in\integer_+$,  $\eta^L(t)=\xi\in\integer_+^L$ and $q:=\rho_L(\xi)$ then 
  \begin{equation}
    \label{eq:grb}
    \eta^L(t+1)=\xi-w^L(\xi)+B^{L,q}.
  \end{equation}
  $B^{L,q}$ is a random vector with values in $\integer_+^L$.
  It is independent from everything, independently generated at each time step $t$ and satisfies
  \begin{equation}
  \label{eq:co}
  \sum_{x=1}^L B_x^{L,q}=(1-q(\{0\}))L.
\end{equation}
\end{de}

Last equation assures the conservation of the number of particles for the GRBB process.

\begin{de}
  \label{de:nlp}
  Given a measurable map $\psi\colon\mathcal{P}(\integer_+)\to\mathcal{P}(\integer_+)$ we define the $\psi$-\emph{nonlinear process $(\eta(t))_{t\geq0}$} as the discrete time random process with values in $\integer_+$ defined as follows.
  For some $t\in\integer_+$ let $q\in\mathcal{P}(\integer_+)$ be the distribution of $\eta(t)$ and assume $\eta(t)=\eta\in\integer$ then
\begin{equation}
  \label{eq:nl1}
  \eta(t+1)=\eta-\ind(\eta>0)+B^q.
\end{equation}
 $B^q$ is a random variable with values in $\integer_+$, it has distribution $\psi(q)$, it is independent from everything and it is independently generated at each time step $t$.
\end{de}

We want to provide a quantitative estimate on the rate of convergence of the empirical measure of the GRBB process to the distribution of a \emph{corresponding} nonlinear process.
The following is a sufficient condition to this aim.
\begin{con}
  \label{con:1}
  The distribution of $B^{L,q}$ is symmetric.
  For any $q\in\mathcal{P}(\integer_+)$ denote with $\nu_L^q\in\mathcal{P}(\integer_+^2)$ the distribution of $(B_1^{L,q},B_2^{L,q})$ and assume that there exists $\mu^q\in\mathcal{P}(\integer_+)$ and a constant $C$ such that
  \begin{equation}
    \label{eq:co1}
    \sup_{q\in\mathcal{P}(\integer_+)}\Vert\nu_L^q-\mu^q\otimes\mu^q\Vert
    \leq \frac{C}{L}.
  \end{equation}
\end{con}

  In the context of Gibbs measures the above condition is rather natural.
It can be interpreted as an equivalence of ensembles estimate because it  states that the distance between the two sites marginal of the \emph{canonical} and \emph{grand canonical} ensemble decreases as the inverse of the volume $L$.

Among the $\psi$-nonlinear processes we need to choose the one that gives the limiting evolution of the GRBB process.
This is done in the following definition.
\begin{de}
  Given a GRBB process, such that the random vector $B^{L,q}$ satisfies Condition~\ref{con:1}, the \emph{corresponding} nonlinear process is the $\psi$-nonlinear process with $\psi(q):=\mu^q$.  
\end{de}

We can now state our main result on propagation of chaos.

\begin{teo}
  \label{teo:mai}
  Let $(\eta^L(t))_{t\geq0}$ be a GRBB process with symmetric initial distribution and such that $\sup_L\EE(\eta_1^L(0))<+\infty$.
  Assume that Condition~\ref{con:1} holds.
  Let $(\eta(t))_{t\geq0}$ be the corresponding nonlinear process and assume that $\psi\colon \mathcal{P}(\integer_+)\to \mathcal{P}(\integer_+)$ is Lipschitz.
  Define $Q_L(t):=\rho_L(\eta^L(t))$ and let $Q(t)$ be the distribution of $\eta(t)$.
  If there exists a constant $C$ such that 
  \begin{equation}
    \label{eq:ma1}
    \PP\Big(\Vert Q_L(0)-Q(0)\Vert>\delta\Big)
    \leq\frac{C}{\sqrt{L}},
  \end{equation}
  then there exists a constant $C'$ such that
  \begin{equation}
        \label{eq:ma2}
    \PP\Big(\sup_{t\in[0,T]}\Vert Q_L(t)-Q(t)\Vert>\delta\Big)
    \leq\frac{C'}{\sqrt{L}}.
  \end{equation}
\end{teo}

\begin{proof}
  First of all observe that
    \begin{multline*}
    \PP\Big(\sup_{t\in[0,T]}\Vert Q_L(t)-Q(t)\Vert>\delta\Big)
    =\PP\big(\exists\, t\in[0,T] \colon \Vert Q_L(t)-Q(t)\Vert>\delta\big)\\
    \leq\sum_{t=0}^T \PP\big(\Vert Q_L(t)-Q(t)\Vert>\delta\big)
    \leq (T+1) \sup_{t\in[0,T]}\PP\big(\Vert Q_L(t)-Q(t)\Vert>\delta\big).
  \end{multline*}
  Thus it is enough to show that for any $t\in[0,T]$ there exists a constant $C$ such that
  \begin{equation}
    \label{eq:qt}
    \PP\big(\Vert Q_L(t)-Q(t)\Vert>\delta\big)
    \leq \frac{C}{\sqrt{L}}.
  \end{equation}  
  We prove \eqref{eq:qt} by induction on $t$.
  By hypothesis \eqref{eq:qt} is true for $t=0$. 
  We assume it holds for some $t\geq0$ and prove it for $t+1$.

  Observe that if $Q(t)=q\in\mathcal{P}(\integer_+)$ then, for any $n\in\integer_+$
  \begin{equation}
    \label{eq:qn}
    Q(t+1)(\{n\})
    =q(\{0\})\PP(B^q=n)+\sum_{k=1}^{n+1}q(\{k\})\PP(B^q=n-k+1)
    :=F(q)(\{n\}),
  \end{equation}
  where $F\colon\mathcal{P}(\integer_+)\to\mathcal{P}(\integer_+)$.
  $F$ is Lipschitz because $\psi$ is Lipschitz.

  Adding and subtracting terms we have that
  \begin{multline*}
    \Vert Q_L(t+1)-Q(t+1)\Vert
    \leq \Vert Q_L(t+1)-\EE[Q_L(t+1)\vert Q_L(t)] \Vert\\
    +\Vert \EE[Q_L(t+1)\vert Q_L(t)]-F(Q_L(t)) \Vert
    +\Vert F(Q_L(t)) -Q(t+1)\Vert.
  \end{multline*}
  Thus for any $\delta>0$,
    \begin{multline}
      \label{eq:3t}
    \PP\big(\Vert Q_L(t+1)-Q(t+1)\Vert>\delta\big)
    \leq 
    \PP\big(\Vert Q_L(t+1)-\EE[Q_L(t+1)\vert Q_L(t)] \Vert \geq\delta/3\big)\\
    +\PP(\Vert \EE[Q_L(t+1)\vert Q_L(t)]-F(Q_L(t)) \Vert \geq\delta/3\big)
    +\PP\big(\Vert F(Q_L(t)) -Q(t+1)\Vert \geq\delta/3\big).
  \end{multline}
  We bound separately the three terms on the right hand side of \eqref{eq:3t} with bounds smaller that $C/\sqrt{L}$, for a suitable constant $C$.

  The last one can be bounded by observing that by Lipschitz condition on $F$ we have
  \begin{equation*}
    \Vert F(Q_L(t)) -Q(t+1)\Vert
    =\Vert F(Q_L(t)) -F(Q(t))\Vert
    \leq \mathrm{Lip}(F) \Vert Q_L(t) -Q(t)\Vert,
  \end{equation*}
  and using inductive hypothesis \eqref{eq:qt}.

  To bound the second term of \eqref{eq:3t} we observe that the \emph{empirical process} $(Q_L(t))_{t\geq0}$ is a Markov chain with values in $\mathcal{P}(\integer_+)$.
  Its evolution can be described in the following way.
  Assume that $Q_L(t)=q$ and define for $k\in\natural$ the discrete intervals
  \begin{equation}
    \label{eq:la}
    \Lambda^q_k
    =
    \begin{cases}
      [1,L\big(q(\{0\})+q(\{1\})\big)]\cap\natural & \text{if $k=1$} \\
      \displaystyle  \Big[L\sum_{h=0}^{k-1}q(\{h\})+1,L\sum_{h=0}^{k}q(\{h\})\Big]\cap\natural & \text{if $k\geq2$},
    \end{cases}
  \end{equation}
  where we define $[a,b]=\emptyset$ when $a>b$.
  Then
  \begin{equation}
    \label{eq:ql}
    Q_L(t+1)(\{n\})
    =\frac{1}{L}\sum_{k=1}^{n+1}\sum_{x\in\Lambda^q_k}\ind(B^{L,q}_x=n+1-k).
  \end{equation}
  By equation \eqref{eq:qn} and the definition of $\Lambda_k^q$ \eqref{eq:la} we have that
  \begin{equation*}
    F(q)(\{n\})
    =\sum_{k=1}^{n+1}\frac{|\Lambda_k^q|}{L}\PP(B^q=n+1-k).
  \end{equation*}
  Thus using \eqref{eq:ql}, conditionally to $Q_L(t)=q$,
  \begin{multline*}
    \Vert \EE[Q_L(t+1)\vert Q_L(t)]-F(Q_L(t))\Vert\leq\\
    \frac{1}{2L}\sum_{n=0}^{+\infty}\sum_{k=1}^{n+1}\sum_{x\in\Lambda_k^q}\vert\PP(B^{L,q}_x=n+1-k)-\PP(B^q=n+1-k)\vert\\
        =\frac{1}{2L}\sum_{k=1}^{+\infty}\sum_{x\in\Lambda_k^q}\sum_{k=n+1}^{+\infty}\vert\PP(B^{L,q}_x=n+1-k)-\PP(B^q=n+1-k)\vert\\
    =\frac{1}{L}\sum_{k=1}^{+\infty}\sum_{x\in\Lambda_k^q}\Vert\PP(B^{L,q}_x\in\cdot)-\PP(B^q\in\cdot)\Vert
    =\Vert\PP(B^{L,q}_1\in\cdot)-\PP(B^q\in\cdot)\Vert\\
    \leq \sup_{q\in\mathcal{P}(\integer_+)}\Vert\PP(B^{L,q}_1\in\cdot)-\PP(B^q\in\cdot)\Vert.
  \end{multline*}
  Due to Condition~\ref{con:1} the last line is bounded by $C/L$.
  Markov inequality gives 
  \begin{equation*}
    \PP\big(\Vert \EE[Q_L(t+1)\vert Q_L(t)]-F(Q_L(t))\Vert>\delta/3\big)
    \leq\frac{3C}{\delta L}.
  \end{equation*}

    We now bound the first term of \eqref{eq:3t}.
    Define for any $n\in\integer_+$
  \begin{equation}
    \label{eq:M}
    M_L(n)
    :=Q_L(t+1)(\{n\})-\EE[Q_L(t+1) (\{n\})|Q_L(t)].
  \end{equation}
  Then for any $\bar n\in\natural$
  \begin{equation}
    \label{eq:2t}
    \begin{split}
           \PP\big(\Vert Q_L(t+1)&-\EE[Q_L(t+1)|Q_L(t)]\Vert>\delta/3\big)\\
    &\leq \PP\Big(\sum_{n\leq\bar n} \vert M_L(n)\vert>\delta/3\Big)+\PP\Big(\sum_{n>\bar n}\vert M_L(n)\vert>\delta/3\Big).
    \end{split}
  \end{equation}
  For the second term in \eqref{eq:2t} we observe that
  \begin{equation*}
    \vert M_L(n)\vert
    \leq Q_L(t+1)(\{n\})+\EE[Q_L(t+1)(\{n\})|Q_L(t)],
  \end{equation*}
  thus by Markov inequality
    \begin{multline*}
    \PP\Big(\sum_{n>\bar n}\vert M_L(n)\vert>\delta/3\Big)
    \leq \frac{6}{\delta} \sum_{n>\bar n} \EE[Q_L(t+1)(\{n\})]
    \leq  \frac{6}{\delta}\EE\Big[\sum_{n>\bar n} \frac{n}{\bar n} Q_L(t+1)(\{n\})\Big]\\
        \leq  \frac{6}{\delta\bar n}\EE\Big[\sum_{n=0}^{+\infty} n\, Q_L(t+1)(\{n\})\Big].
  \end{multline*}
  We observe that
  \begin{equation*}
    \sum_{n=0}^{+\infty} n\, Q_L(t+1)(\{n\})
    =\frac{1}{L}\sum_{x=1}^L\eta_x^L(t+1)
   = \frac{1}{L}\sum_{x=1}^L\eta_x^L(0),
  \end{equation*}
  because, by \eqref{eq:co}, the number of particles of the system is preserved.
  Thus by the symmetry of the distribution of $\eta^L(0)$ and the assumed uniform bound on $\EE[\eta^L_1(0)]$ we have
  \begin{equation}
    \label{eq:bn}
    \PP\Big(\sum_{n>\bar n}\vert M_L(n)\vert>\delta/3\Big)
    \leq \frac{C}{\bar n}\EE[\eta^L_1(0)]
    \leq \frac{C'}{\bar n},
  \end{equation}
  for a suitable constant $C'$.
  For the first term in \eqref{eq:2t}, using Markov and Cauchy-Schwarz inequalities we have
  \begin{equation}
    \label{eq:Ma}
    \PP\Big(\sum_{n\leq\bar n} \vert M_L(n)\vert>\delta/3\Big)
    \leq\frac{9\bar n}{\delta^2}\sum_{n\leq\bar n}\EE(M_L(n)^2).
  \end{equation}
  By definition of $M_L$ in \eqref{eq:M} we have that
  \begin{multline}
    \label{eq:va1}
    \EE(M_L(n)^2)
    =\EE\big[\EE[(Q_L(t+1)(\{n\})-\EE[Q_L(t+1)(\{n\})\vert Q^L(t)])^2\vert Q_L(t)]\big]\\
    =\EE\big[\Var[Q_L(t+1)(\{n\})\vert Q_L(t)]\big].
  \end{multline}
  Thus, using \eqref{eq:ql} conditionally to $Q_L(t)=q$
  \begin{equation*}
    Q_L(t+1)
    =\frac{1}{L}\sum_{(x,k)} Z^{L,q}_{x k},
  \end{equation*}
  where in the last sum $(x,k)\in([1,L]\times[1,n+1])\cap\natural^2$ and
  \begin{equation*}
    Z_{x k}^{L,q}:=\ind(x\in\Lambda_k^q) \ind(B^{L,q}_x=n+1-k).
  \end{equation*}
  Which implies
  \begin{equation}
    \label{eq:va}
    \Var[Q_L(t+1)(\{n\})\vert Q_L(t)=q]
    =\frac{1}{L^2}\sum_{(x,k)}\Var(Z_{x k}^{L,q})+\frac{1}{L^2}\sum_{(x,k)\neq(y,h)}\Cov(Z_{x k}^{L,q},Z_{y h}^{L,q}).
  \end{equation}
  In the sequel of this proof to keep notation simple we write
  \begin{equation*}
              \nu_L^q(n,m):=\PP(B_1^{L,q}=n,B_2^{L,q}=m),
              \qquad
              \nu^q_L(n):=\PP(B_1^{L,q}=n),
              \qquad
              \mu^q(n):=\PP(B^q=n).
  \end{equation*}
   The variance term in \eqref{eq:va} can be bounded, using the symmetry of the distribution of $B^{L,q}$, by
     \begin{equation}
    \label{eq:b1}
    \Var(Z_{x k}^{L,q})
    =\ind(x\in\Lambda_k^q)\Var(\ind(B^{L,q}_x=n+1-k))
    \leq \ind(x\in\Lambda_k^q)\nu_L^q(n+1-k).
  \end{equation}
  The covariance term in \eqref{eq:va} can be bounded, using the symmetry of the distribution of $B^{L,q}$ by
   \begin{multline}
    \label{eq:b2}
    \Cov(Z_{x k}^{L,q},Z_{y h}^{L,q})\\
    \leq \ind(x\in\Lambda^q_k) \ind(y\in\Lambda^q_h)\vert\nu_L^q(n+1-k,n+1-h)-\nu_L^q(n+1-k) \nu_L^q(n+1-h)\vert.
  \end{multline}
  Using the bounds \eqref{eq:b1} and \eqref{eq:b2} in \eqref{eq:va}, summing on $x$ and $y$ and changing the variables $n+1-k\mapsto k$ and   $n+1-h\mapsto h$ we arrive to
    \begin{multline*}
      \Var[Q_L(t+1)(\{n\})\vert Q_L(t)=q]\\
    \leq\frac{1}{L}\sum_{k=0}^{n}\frac{\vert\Lambda_{n+1-k}^q\vert}{L} \nu^q_L(k)
    +\sum_{k,h=0}^{n}\frac{\vert\Lambda_{n+1-k}^q\vert \vert\Lambda_{n+1-h}^q\vert}{L^2}\vert \nu^q_L(k,h)-\nu^q_L(k) \nu^q_L(h)\vert.
  \end{multline*}
  So, exchanging the sums to obtain the second inequality below, we get
  \begin{multline}
    \label{eq:bva}
    \sum_{n\leq\bar n}\Var[Q_L(t+1)(\{n\})\vert Q_L(t)=q]
    \leq\sum_{n=0}^{+\infty}\Var[Q_L(t+1)(\{n\})\vert Q_L(t)=q]\\
       \leq\frac{1}{L}\sum_{k=0}^{+\infty}\nu^q_L(k)\sum_{n=k}^{+\infty}\frac{\vert\Lambda_{n+1-k}^q\vert}{L}
    +\sum_{k,h=0}^{+\infty}\vert\nu_L^q(k,h)-\nu^q_L(k) \nu^q_L(h)\vert\sum_{n=k\vee h}^{+\infty}\frac{\vert\Lambda_{n+1-k}^q\vert \vert\Lambda_{n+1-h}^q\vert}{L^2}.
  \end{multline}
  Since $\{\Lambda_k^q\colon k\in\natural\}$ is a partition of $[1,L]\cap\natural$:
  \begin{equation*}
    \sum_{n=k}^{+\infty}\frac{\vert\Lambda_{n+1-k}^q\vert}{L}=1
  \end{equation*}
  and
  \begin{equation*}
    \sum_{n=k\vee h}^{+\infty}\frac{\vert\Lambda_{n+1-k}^q\vert \vert\Lambda_{n+1-h}^q\vert}{L^2}
    \leq\sum_{n=k\vee h}^{+\infty}\frac{\vert\Lambda_{n+1-k\vee h}^q\vert}{L}
    =1,
  \end{equation*}
  thus by \eqref{eq:bva} and Condition~\ref{con:1} there exists a constant $C$ such that
  \begin{multline*}
    \sum_{n\leq\bar n}\Var[Q_L(t+1)(\{n\})\vert Q_L(t)=q]\\
    \leq \frac{1}{L}+\sum_{k,h=0}^{+\infty}\vert \nu^q_L(k,h)-\nu^q_L(k) \nu^q_L(h)\big\vert
    \leq  \frac{1}{L}+6\Vert\nu_L^q-\mu^q\otimes\mu^q\Vert
    \leq \frac{C}{L}.
  \end{multline*}
  By \eqref{eq:Ma} and \eqref{eq:va1} there is a constant $C$
  \begin{equation*}
    \PP\Big(\sum_{n\leq\bar n} \vert M_L(n)\vert>\delta/3\Big)
    \leq \frac{C\bar n}{L}.
  \end{equation*}
   Plugging this bound and the bound il \eqref{eq:bn} in \eqref{eq:2t} and optimizing on $\bar n$ we arrive to
   \begin{equation*}
     \PP\Big(\Vert M_L(n)\Vert>\delta/3\Big)
     \leq C\Big(\frac{1}{\bar n}+\frac{\bar n}{L}\Big)
   \leq \frac{2C+1}{\sqrt{L}},
   \end{equation*}
  for some constant $C$.
  
\end{proof}

\begin{re}
  It is natural to wonder if a better decay rate, in \eqref{eq:co1} and in the initial condition \eqref{eq:ma1}, may improve this rate.
  This can be true in some cases, however simply replacing \eqref{eq:co1} and \eqref{eq:ma1} with stronger conditions our proof does not provide automatically a better rate in \eqref{eq:ma2}.
\end{re}

\section{Classical occupancy models}
\label{sec:app}

In this section we consider the GRBB process when $B^{L,q}$ is distributed according to the Fermi-Dirac, Maxwell-Boltzmann or Bose-Einstein statistics.
The first two models are examples of rather natural strategies to reassign customers in a Jackson network.
In the first case the customers are efficiently reassigned in different queues. 
In the second case every customer is independently reassigned to a queue.
The last case is included as it is a simple example of a nontrivial dependent random reassigned rule.

We here show that the hypothesis of Theorem~\ref{teo:mai} holds for these three classical occupancy models.
In particular we prove that Condition~\ref{con:1} is satisfied for these models when $\mu^q$ is the natural limit point of the one site marginal of $B^{L,q}$.
The obtained bounds will have the decay of the right hand side of Condition~\ref{con:1} when the number of particles of the considered system is proportional to $L$.

\subsection{Fermi-Dirac statistics}
\label{sec:FD}

We say that the random vector $X=(X_1,\dots,X_L)$ follows the Fermi-Dirac statistics with $L$ sites and  $N\leq L$ particles if 
\begin{equation*}
  \PP(X_1=x_1,\dots,X_L=x_L)
  =
  \begin{cases}
     \frac{1}{\binom{L}{N}} & \text{if $(x_1,\dots,x_L)\in\{0,1\}^L$ and $\sum_{k=1}^Lx_k=N$}\\
      0 & \text{otherwise.}
  \end{cases}
\end{equation*}
Given $q\in\mathcal{P}(\integer_+)$, let $\mu^q\in\mathcal{P}(\integer_+)$ be the Bernoulli distribution with parameter $1-q(\{0\})$ and assume that $B^{L,q}$ follows the Fermi-Dirac statistics with $L$ sites and  $(1-q(\{0\}))L$ particles.
The map $\psi(q):=\mu^q$ is, in this case, 1-Lipschitz: given $q,q'\in\mathcal{P}(\integer_+)$
\begin{equation*}
  \begin{split}
      \Vert \mu^q-\mu^{q'}\Vert
  =\frac{1}{2}\sum_{s\in\{0,1\}}\vert \mu^q(\{s\})- \mu^{q'}(\{s\})\vert
  =\vert q(\{0\})- q'(\{0\})\vert\\
  =\frac{1}{2}\vert q(\{0\})- q'(\{0\})\vert+\frac{1}{2}\Big\vert \sum_{s\in\natural}q(\{s\})- \sum_{s\in\natural}q'(\{s\})\Big\vert
  \leq \Vert q-q'\Vert.
  \end{split}
\end{equation*}
To prove that Condition~\ref{con:1} holds we need the following result.
\begin{teo}
  \label{teo:FD}
  Assume that $X$ follows the Fermi-Dirac statistics with $L$ sites and $N$ particles.
  Denote with $\gamma_L^N\in\mathcal{P}(\integer_+^2)$ the distribution of $(X_1,X_2)$ and let $\lambda^{N/L}\in\mathcal{P}(\integer_+)$ be the Bernoulli distribution with parameter $N/L$.
  Then,
  \begin{equation*}
    \Vert\gamma_L^N-\lambda^{N/L}\otimes\lambda^{N/L}\Vert
    =\frac{2N}{L(L-1)}\Big(1-\frac{N}{L}\Big)
  \end{equation*}
\end{teo}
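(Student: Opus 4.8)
The plan is to compute the two-site marginal $\gamma_L^N$ of the Fermi-Dirac distribution explicitly by counting configurations, and then to evaluate the total variation distance directly against the product Bernoulli $\lambda^{N/L}\otimes\lambda^{N/L}$ by summing over the four points of $\{0,1\}^2$. Since $X$ is supported on $\{0,1\}^L$, the marginal $(X_1,X_2)$ lives on $\{0,1\}^2$, so there are only four probabilities to determine, and everything reduces to elementary binomial-coefficient bookkeeping.

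First I would compute the joint probabilities. For $(X_1,X_2)=(1,1)$ we must place the remaining $N-2$ particles among the remaining $L-2$ sites, giving $\PP(X_1=1,X_2=1)=\binom{L-2}{N-2}/\binom{L}{N}=\tfrac{N(N-1)}{L(L-1)}$. Similarly $\PP(X_1=1,X_2=0)=\PP(X_1=0,X_2=1)=\binom{L-2}{N-1}/\binom{L}{N}=\tfrac{N(L-N)}{L(L-1)}$, and $\PP(X_1=0,X_2=0)=\binom{L-2}{N}/\binom{L}{N}=\tfrac{(L-N)(L-N-1)}{L(L-1)}$. The product measure assigns $\lambda^{N/L}\otimes\lambda^{N/L}(1,1)=(N/L)^2$, and analogously $(N/L)(1-N/L)$ to each mixed point and $(1-N/L)^2$ to $(0,0)$.

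Next I would form the four differences $\gamma_L^N(\{s\})-\lambda^{N/L}\otimes\lambda^{N/L}(\{s\})$. The key computation is that each of these differences has the \emph{same} absolute value, namely $\tfrac{N}{L}(1-\tfrac{N}{L})\tfrac{1}{L-1}$; indeed, writing $p=N/L$, one checks that $\tfrac{N(N-1)}{L(L-1)}-p^2$ and the other three differences all collapse to $\pm\,p(1-p)/(L-1)$ after a common-denominator simplification (the sign pattern being $+,-,-,+$, consistent with $\sum_s$ of the differences vanishing). I would carry out this simplification for one representative point and invoke symmetry for the rest. Then the total variation distance is
\begin{equation*}
  \Vert\gamma_L^N-\lambda^{N/L}\otimes\lambda^{N/L}\Vert
  =\frac12\sum_{s\in\{0,1\}^2}\Big\vert\gamma_L^N(\{s\})-\lambda^{N/L}\otimes\lambda^{N/L}(\{s\})\Big\vert
  =\frac12\cdot 4\cdot\frac{N}{L}\Big(1-\frac{N}{L}\Big)\frac{1}{L-1},
\end{equation*}
which is exactly $\tfrac{2N}{L(L-1)}(1-\tfrac{N}{L})$, as claimed.

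There is no serious obstacle here: the result is an exact identity rather than an asymptotic bound, so no estimation or limiting argument is needed. The only point requiring care is the algebraic simplification showing that all four coordinate differences have equal magnitude $p(1-p)/(L-1)$; the cleanest route is to factor each difference over the common denominator $L^2(L-1)$ and verify the numerators agree up to sign, which also serves as a consistency check since the four signed differences must sum to zero.
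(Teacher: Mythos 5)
Your proposal is correct and follows essentially the same route as the paper: both reduce the total variation distance to the four point masses on $\{0,1\}^2$ and observe that the four differences share a common magnitude, the paper identifying it as $\vert\Cov(X_1,X_2)\vert=\frac{1}{L-1}\Var(X_1)=\frac{N}{L(L-1)}(1-\frac{N}{L})$ via the constraint $\sum_{x}X_x=N$ rather than by your direct binomial-coefficient algebra. One small slip: since the covariance is negative, the sign pattern over $(1,1),(1,0),(0,1),(0,0)$ is $-,+,+,-$ rather than $+,-,-,+$, but this is immaterial since only absolute values enter the total variation sum.
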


\begin{proof}
  We have that 
  \begin{equation}
    \label{eq:FD1}
    \Vert\gamma_L^N-\lambda^{N/L}\otimes\lambda^{N/L}\Vert
    =\frac{1}{2}\sum_{h,k=0}^1\vert\PP(X_1=h,X_2=k)-\PP(X_1=h)\PP(X_2=k)\vert.
  \end{equation}
  Observing that
  \begin{equation*}
    \PP(X_1=h,X_2=k)-\PP(X_1=h)\PP(X_2=k)
    =(-1)^{h+k}\Cov(X_1,X_2),
    \qquad
    h,k\in\{0,1\},
  \end{equation*}
  and, because $X_1+\dots+X_L=N$,
  \begin{equation*}
    \Cov(X_1,X_2)
    =-\frac{1}{L-1}\Var(X_1)
    =-\frac{N}{L(L-1)}\Big(1-\frac{N}{L}\Big),
  \end{equation*}
  the result follows.
\end{proof}
Theorem~\ref{teo:FD}  assures that Condition~\ref{con:1} holds as 
\begin{equation*}
  \sup_{q\in\mathcal{P}(\integer_+)}\Vert\nu_L^q-\mu^q\otimes\mu^q\Vert
  = \sup_{N/L\leq 1}\Vert\gamma_L^N-\lambda^{N/L}\otimes\lambda^{N/L}\Vert
  \leq \frac{1}{L}.
\end{equation*}

  We observe that in the present case the GRBB process $(\eta^L(t))_{t\geq0}$ started with $N\leq L$ particles is ergodic and reversible with invariant measure given by the Fermi-Dirac statistics with $L$ sites and $N$ particles.
  Thus in this case propagation of chaos holds also at equilibrium. 
  If $N>L$ the GRBB process is not irreducible as there are blocked configurations.
  For completeness below we give an upper bound for the mixing time for the GRBB process in this case.

\begin{pro}
  Let $(\eta^L(t))_{t\geq0}$ be the GRBB process when $B^{L,q}$ follows the Fermi-Dirac statistics with $L$ sites and  $(1-q(\{0\}))L$ particles.
  Assume that $\eta^L(0)\in\integer_+^L$ and
  \begin{equation*}
    N:=
    \sum_{x=1}^L\eta^L_x(0)\in[2, L/2].
  \end{equation*}
  Then $(\eta^L(t))_{t\geq0}$ is ergodic and its invariant measure $\pi^{N,L}$ is the Fermi-Dirac distribution with $L$ sites and $N$ particles.
  Furthermore
  \begin{equation*}
    t_{\mathrm{mix}}
    :=\inf\Big\{t\geq0\colon \Vert\PP(\eta^L(t)\in\cdot)-\pi^{N,L}\Vert\leq1/4\Big\}
    \leq 13 -4L\log\Big(1-\frac{N-1}{L}\Big).
  \end{equation*}
  where $\PP(\eta^L(t)\in\cdot)$ is the distribution of $\eta^L(t)$.  
\end{pro}

\begin{proof}
  Define the decreasing sequence of events $A_1\supseteq A_2\supseteq\dots\supseteq A_N$, where
  \begin{equation*}
    A_n
    :=\Big\{\eta\in\integer_+^L\colon \sum_{x=1}^L\ind(\eta_x>0)
    \geq n\Big\},
  \end{equation*}
  and the increasing sequence $0:=\tau_1\leq\dots\leq\tau_N$ of hitting times
     \begin{equation*}
      \tau_n
      :=\inf\big\{t\geq\tau_{n-1}\colon \eta^L(t)\in A_n \}.
    \end{equation*}
    We notice that $\tau_N$ is the first time such that in every site there is at most one particle and that at time $t=\tau_N+1$ the system is distributed with its stationary measure independently from its state at time $t=\tau_N$.
    So $\tau_N+1$ is a \emph{strong stationary time} and we can use Proposition~6.11 of \cite{Le:Pe} to get
      $t_{\mathrm{mix}}\leq\inf\big\{t\geq0\colon \PP(\tau_N+1>t)\leq1/4\big\}$.
      By Markov inequality we have $t_{\mathrm{mix}}\leq4\EE(\tau_N+1)+1$.
    To bound $\EE(\tau_N)$ we introduce the random variables $\sigma_n:=\tau_{n+1}-\tau_{n}$ so that
    \begin{equation}
      \label{eq:sig}
      \EE(\tau_N)=\sum_{n=1}^{N-1}\EE(\sigma_n).
    \end{equation}
    Now observe that $\sigma_n>t$ if and only if $\eta^L(\tau_n+t)\in A_n\setminus A_{n+1}$.
    In fact if $\tau_{n+1}>\tau_n +t$ then $\eta^L(\tau_n +t)\not\in A_{n+1}$ but $\eta^L(\tau_n+t)\in A_{n}$ because $\tau_n+t>\tau_n$.
    Thus $\eta^L(\tau_n+t)\in A_{n}\setminus A_{n+1}$.
    On the other hand if $\eta^L(\tau_n+t)\in A_{n}\setminus A_{n+1}$ then $\eta^L(\tau_n+t)\not\in A_{n+1}$ thus $\tau_{n+1}>\tau_n +t$.
    
     By strong Markov property
     \begin{equation}
       \label{eq:sm}
       \begin{split}
         \PP(\sigma_n>t)&=\PP(\eta^L(\tau_n+t)\in A_n\setminus A_{n+1})\\
         &=\sum_{\eta\in A_n\setminus A_{n+1}}\PP(\eta^L(1)\in A_n\setminus A_{n+1}|\eta^L(0)=\eta)\PP(\eta^L(\tau_n+t-1)=\eta).
       \end{split}
     \end{equation}
          A configuration in $A_n\setminus A_{n+1}$ is a configuration where there are $n$ \emph{mobile} particles, $N-n$ \emph{blocked} particles and $L-n$ empty sites.    
     So, if $\eta^L(0)=\eta\in A_n\setminus A_{n+1}$, we have that $\eta^L(1)\in A_n\setminus A_{n+1}$ if and only if the process puts a mobile particle on every site occupied by a blocked particle.
     Let $p_{k,m}$ be the probability that, following the Fermi-Dirac statistics with $L$ sites and $m\leq L$ particles, the sites $\{1,\dots,k\}$ are occupied.
      Thus if $\bar k\in\{1,\dots,N-n\}$ is the number of sites occupied by the blocked particles in the configuration $\eta$, then
      \begin{equation*}
        \PP(\eta^L(1)\in A_n\setminus A_{n+1}|\eta^L(0)=\eta)=p_{\bar k,n}\leq p_{1,n}=n/L.
      \end{equation*}
      Plugging this bound into \eqref{eq:sm} we get
     \begin{align*}
       \PP(\sigma_n>t)
       \leq\frac{n}{L} \sum_{\eta\in A_n\setminus A_{n+1}}\PP(\eta^L(\tau_n+t-1)=\eta)
       &=\frac{n}{L}\PP(\eta^L(\tau_n+t-1)\in A_n\setminus A_{n+1})\\
       &=\frac{n}{L}\PP(\sigma_n>t-1),
     \end{align*}
     which, iterating, implies
     \begin{equation*}
       \PP(\sigma_n>t)
       \leq\left(\frac{n}{L}\right)^t.
     \end{equation*}
     So, by summing the geometric series, $\EE(\sigma_n)\leq L/(L-n)$ and by \eqref{eq:sig}
     \begin{equation*}
       \EE(\tau_N)\leq
       L\sum_{n=L+1-N}^{L-1}\frac{1}{n}\leq
       \frac{L}{L-N+1}+L\log\left(\frac{L-1}{L-N+1}\right).
     \end{equation*}
     Thus
     \begin{equation*}
       t_{\mathrm{mix}}
       \leq4\EE(\tau_N+1)+1
       \leq5+\frac{4 L}{L-N+1}-4L\log\Big(1-\frac{N-1}{L}\Big)
     \end{equation*}
     and the result follows. 
\end{proof}

\subsection{Maxwell-Boltzmann statistics}
\label{sec:MB}

We say that the random vector $X=(X_1,\dots,X_L)$ follows the Maxwell-Boltzmann statistics with $L$ sites and particles $N$ if 
\begin{equation*}
  \PP(X_1=x_1,\dots,X_L=x_L)
  =
  \begin{cases}
   \binom{N}{x_1,\dots,x_L}\frac{1}{L^N}, &\text{if $(x_1,\dots,x_L)\in\integer_+^L$ and $\sum_{k=1}^Lx_k=N$,}\\
   0 &\text{otherwise.}
  \end{cases}
\end{equation*}
Given $q\in\mathcal{P}(\integer_+)$, let $\mu^q\in\mathcal{P}(\integer_+)$ be the Poisson distribution with parameter $1-q(\{0\})$  and assume that $B^{L,q}$ follows the Maxwell-Boltzmann statistics with $L$ sites and  $(1-q(\{0\}))L$ particles.
In this case the GRBB process is the RBB process studied in \cite{Be:Cl:Na:Pa:Po} and \cite{Ca:Po}.
To apply Theorem~\ref{teo:mai} we have to show that Condition~\ref{con:1} holds and the map $\psi$ is Lipschitz.
To check Lipschitz property take $q,q'\in\mathcal{P}(\integer_+)$ and set $q_0:=q(\{0\})$, $q'_0:=q'(\{0\})$.
Then
\begin{equation*}
  \Vert\mu^q-\mu^{q'}\Vert
  =\frac{1}{2}\sum_{k=0}^{+\infty}\frac{1}{k!}\big|(1-q_0)^ke^{-(1-q_0)}-(1-q'_0)^ke^{-(1-q'_0)}\big|
  \leq \frac{1}{2}\sum_{k=0}^{+\infty}\frac{k+1}{k!}|q_0-q'_0|
  \leq 3 \Vert q-q'\Vert.
\end{equation*}
Condition~\ref{con:1} is implied by the following result.

\begin{teo}
  Assume that $X$ follows the Maxwell-Boltzmann statistics with $L$ sites and $N$ particles.
  Denote with $\gamma_L^N\in\mathcal{P}(\integer_+^2)$ the distribution of $(X_1,X_2)$ and let $\lambda^{N/L}\in\mathcal{P}(\integer_+)$ be the Poisson distribution with parameter $N/L$.
  Then,
  \begin{equation*}
    \Vert\gamma_L^N-\lambda^{N/L}\otimes\lambda^{N/L}\Vert
    \leq\frac{4N}{L^2}.
  \end{equation*}
\end{teo}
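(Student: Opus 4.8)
The plan is to reduce the two‑dimensional total variation distance to a \emph{one‑dimensional} one, by observing that $\gamma_L^N$ and $\lambda^{N/L}\otimes\lambda^{N/L}$ share exactly the same conditional law of $(X_1,X_2)$ given the sum $X_1+X_2$, so that the two measures can differ only through the law of that sum.

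First I would record the two disintegrations. Under the Maxwell--Boltzmann statistics, throwing $N$ balls uniformly into $L$ boxes, the number landing in boxes $1$ or $2$ is $\mathrm{Bin}(N,2/L)$, and given that this number equals $k$ each such ball sits in box $1$ with probability $1/2$; hence $X_1+X_2\sim\mathrm{Bin}(N,2/L)$ while $X_1\mid\{X_1+X_2=k\}\sim\mathrm{Bin}(k,1/2)$. Under $\lambda^{N/L}\otimes\lambda^{N/L}$ the sum is $\mathrm{Pois}(2N/L)$ and, by the elementary Poisson‑splitting identity, $X_1\mid\{X_1+X_2=k\}\sim\mathrm{Bin}(k,1/2)$ as well. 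Thus both laws factor as
\[
  (\text{law of the sum at }k)\times\binom{k}{x_1}2^{-k},\qquad k=x_1+x_2,
\]
through one and the same conditional kernel $k\mapsto\mathrm{Bin}(k,1/2)$.

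Given this shared structure the computation collapses. Writing $a_k,b_k$ for the pmf of $X_1+X_2$ under $\gamma_L^N$ and under $\lambda^{N/L}\otimes\lambda^{N/L}$ respectively,
\[
  2\Vert\gamma_L^N-\lambda^{N/L}\otimes\lambda^{N/L}\Vert
  =\sum_{k}|a_k-b_k|\sum_{x_1=0}^{k}\binom{k}{x_1}2^{-k}
  =\sum_{k}|a_k-b_k|,
\]
since the inner sum equals $1$. Hence $\Vert\gamma_L^N-\lambda^{N/L}\otimes\lambda^{N/L}\Vert=\Vert\mathrm{Bin}(N,2/L)-\mathrm{Pois}(2N/L)\Vert$, and the proof finishes with the Le Cam Poisson‑approximation inequality $\Vert\mathrm{Bin}(N,p)-\mathrm{Pois}(Np)\Vert\le Np^2$ (which comes from coupling independent Bernoulli and Poisson variables), applied with $p=2/L$; this gives exactly $4N/L^2$.

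The main obstacle is conceptual rather than computational: one must spot that conditioning on the sum removes all dependence and that the two models carry the \emph{same} conditional kernel, so the delicate cancellation built into the bound is taken care of for free. A direct attack — e.g.\ splitting $\gamma_L^N$ through the product of its binomial marginals and then controlling the Poisson‑parameter shift $\EE|\tfrac{N-X_1}{L-1}-\tfrac{N}{L}|$ — destroys this cancellation and yields only the weaker rate $\sqrt{N}/L^{3/2}$. Once the reduction to $\Vert\mathrm{Bin}(N,2/L)-\mathrm{Pois}(2N/L)\Vert$ is in place, the sole quantitative input is the standard Le Cam estimate, which delivers the stated constant with no further effort.
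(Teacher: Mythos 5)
Your proof is correct, and it takes a genuinely different route from the paper's. The paper splits the distance by the triangle inequality through the product of the true one‑site marginals $\bar\gamma_L^N\otimes\bar\gamma_L^N$: the term $\Vert\gamma_L^N-\bar\gamma_L^N\otimes\bar\gamma_L^N\Vert$ is bounded by $2N/L^2$ via an explicit coupling built from the conditional law $X_2\mid X_1=n\sim\mathrm{Bin}(N-n,1/(L-1))$ and a common family of uniforms, and the term $2\Vert\bar\gamma_L^N-\lambda^{N/L}\Vert$ is bounded by $2N/L^2$ by Poisson approximation of $\mathrm{Bin}(N,1/L)$; the two contributions add to $4N/L^2$. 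You instead exploit the fact that both $\gamma_L^N$ and $\lambda^{N/L}\otimes\lambda^{N/L}$ disintegrate through the \emph{same} conditional kernel $k\mapsto\mathrm{Bin}(k,1/2)$ given $X_1+X_2=k$ (multinomial aggregation on one side, Poisson splitting on the other), which yields the exact identity $\Vert\gamma_L^N-\lambda^{N/L}\otimes\lambda^{N/L}\Vert=\Vert\mathrm{Bin}(N,2/L)-\mathrm{Pois}(2N/L)\Vert$, and then a single application of Le Cam's bound $Np^2$ with $p=2/L$ gives $4N/L^2$. Your reduction is cleaner and in fact stronger (it replaces two lossy triangle‑inequality steps by an equality, so any sharper one‑dimensional binomial--Poisson estimate transfers verbatim), at the cost of relying on the specific aggregation/splitting structure; the paper's marginal‑plus‑coupling scheme is the template it reuses, with Pólya‑urn couplings, for the Bose--Einstein case. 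The only implicit assumption in your argument is $L\ge 2$, which is harmless since the statement concerns the pair $(X_1,X_2)$.
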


\begin{proof}
  Let $\bar\gamma_L^N:=\PP(X_1\in\cdot)$ be the one site marginal of $\gamma_L^N$.
    \begin{multline}
      \label{eq:gam}
    \Vert\gamma_L^N-\lambda^{N/L}\otimes\lambda^{N/L}\Vert
   \leq \Vert\gamma_L^N-\bar\gamma_L^N\otimes \bar\gamma_L^N\Vert
   +\Vert \bar\gamma_L^N\otimes \bar\gamma_L^N-\bar\gamma_L^N\otimes\lambda^{N/L}\Vert
   +\Vert \bar\gamma_L^N\otimes\lambda^{N/L}-\lambda^{N/L}\otimes\lambda^{N/L}\Vert\\
   = \Vert \gamma_L^N-\bar\gamma_L^N\otimes \bar\gamma_L^N\Vert
   +2 \Vert \bar\gamma_L^N-\lambda^{N/L}\Vert,
  \end{multline}
  where, in the last line, we used the fact that for arbitrary probability measures $\gamma$, $\mu$ and $\nu$:
  \begin{equation*}
    \Vert\gamma\otimes\mu-\gamma\otimes\nu\Vert
    = \Vert\mu-\nu\Vert.
  \end{equation*}
  We bound separately the 2 terms in equation \eqref{eq:gam}.

  The second term is bounded using Poisson approximation of binomial distribution (see for example \cite{Sh} \S 12) so that
    \begin{equation}
      \label{eq:MB2}
      \Vert \bar\gamma_L^N-\lambda^{N/L}\Vert
      \leq\frac{N}{L^2}.
    \end{equation}
    To bound the first term in \eqref{eq:gam} we construct a coupling, namely we define $(X_1,X_2)\sim\gamma_L^N$ and $(Y_1,Y_2)\sim\bar\gamma_L^N\otimes\bar\gamma_L^N$.
    We take $X_1$ as a binomial random variable with parameters $N$ and $1/L$, \ie $X_1\sim\bar\gamma^N_L$ and consider $U_1,\dots U_N$ \iid uniformly distributed random variables with values in $[0,1]$ \emph{independent from $X_1$}.
    Define next
       \begin{equation*}
      X_2
      :=\sum_{k=1}^{N-X_1}\ind\Big(U_k\leq\frac{1}{L-1}\Big),
     \end{equation*}
     where here and in the sequel we use the convention that $\sum_{k=1}^0:=0$.
    An elementary computation shows that $(X_1,X_2)$ has the distribution of the two components of a random vector following the Maxwell-Boltzmann statistics with $N$ particles an $L$ sites, \ie $(X_1,X_2)\sim\gamma_L^N$.
    Define $Y_1:=X_1$ and 
    \begin{equation*}
      Y_2
      :=\sum_{k=1}^{N}\ind\Big(U_k\leq\frac{1}{L}\Big).
    \end{equation*}
    Clearly $Y_1$ and $Y_2$ are \iid random variables with common binomial distribution with parameters $N$ and $1/L$, \ie $(Y_1,Y_2)\sim\bar\gamma_L^N\otimes\bar\gamma_L^N$.
    Thus, defining the event
    \begin{equation*}
      A
      :=
      \Big\{\sum_{k=1}^{N-X_1}\ind\Big(U_k\leq\frac{1}{L-1}\Big)= \sum_{k=1}^{N-X_1}\ind\Big(U_k\leq\frac{1}{L}\Big)\Big\},
    \end{equation*}
    we have,
    \begin{multline}
      \label{eq:MB3}
      \Vert\gamma_L^N-\bar\gamma_L^N\otimes\bar\gamma_L^N\Vert
      \leq\PP((X_1,X_2)\neq(Y_1,Y_2))
      =\PP(X_2\neq Y_2)\\
      =\PP(X_2\neq Y_2,A)+\PP(X_2\neq Y_2,A^c).
    \end{multline}
    We bound separately the two terms on the right hand side of equation \eqref{eq:MB3}.
    For the first one, we observe that the only way to have that $A$ occurs and $X_2\neq Y_2$ is that
    \begin{equation*}
      \sum_{k=N-X_1+1}^N\ind\Big(U_k\leq \frac{1}{L}\Big)>0.
    \end{equation*}
    Thus, by the independence of $X_1,U_1,\dots,U_N$:
    \begin{equation*}
      \PP(X_2\neq Y_2,A\Big)
      =1-\PP\Big(U_{N-X_1+1}>\frac{1}{L},\dots,U_N >\frac{1}{L}\Big)
      =1-\sum_{n=0}^N\PP(X_1=n)\Big(1-\frac{1}{L}\Big)^n.
    \end{equation*}
    Using the binomial distribution of $X_1$, an explicit computation shows that
    \begin{equation}
      \label{eq:MB4}
      \PP(X_2\neq Y_2,A)
      =1-\Big(1-\frac{1}{L^2}\Big)^N
      \leq \frac{N}{L^2}.
    \end{equation}
    For the second term in equation \eqref{eq:MB3} using again the independence of $X_1,U_1,\dots,U_N$ we can write
    \begin{multline}
      \label{eq:MB5}
      \PP(X_2\neq Y_2,A^c)\\
      \leq\PP(A^c)
      =\sum_{n=0}^N\PP(X_1=n)\PP\Big(\frac{1}{L}< U_k\leq \frac{1}{L-1} \text{ for some } k\in\{1,\dots,N-n\}\Big)\\
      \leq \sum_{n=0}^N\PP(X_1=n)\sum_{k=1}^{N-n}\PP\Big(\frac{1}{L}< U_k\leq \frac{1}{L-1}\Big)
      =\frac{N-\EE(X_1)}{L(L-1)}
      =\frac{N}{L^2}.
    \end{multline}
    Plugging bounds \eqref{eq:MB5} and \eqref{eq:MB4} into equation \eqref{eq:MB3} we obtain
    \begin{equation*}
      \Vert\gamma_L^N-\bar\gamma_L^N\otimes\bar\gamma_L^N\Vert
      \leq \frac{2N}{L^2},
    \end{equation*}
    which together with the bound \eqref{eq:MB2} and equation \eqref{eq:gam} proves the result.
\end{proof}

\begin{re}
  We studied  the mixing time for the GRBB process when $B^{L,q}$ follows the Maxwell-Boltzmann statistics with $L$ sites and  $(1-q(\{0\}))L$ particles in \cite{Ca:Po:1}.
  We proved that the process with $L$ sites and $r L$ particles has mixing time is of order $L$.
\end{re}

\subsection{Bose-Einstein statistics}
\label{sec:BE}

We say that the random vector $X=(X_1,\dots,X_L)$ follows the Bose-Einstein statistics with $L\in\natural$ sites and particles $N\in\natural$ if 
\begin{equation*}
  \PP(X_1=x_1,\dots,X_L=x_L)
  =
  \begin{cases}
    \frac{1}{\binom{L+N-1}{N}}, &\text{if $(x_1,\dots,x_L)\in\integer_+^L$ and $\sum_{k=1}^Lx_k=N$,}\\
   0 &\text{otherwise.}
  \end{cases}
\end{equation*}
We note that a sample of this distribution can be obtained drawing $N$ balls from an \emph{$L$-color Pólya urn} with \emph{double replacement}. 
Namely, put $L$ numbered balls in an empty urn.
Then one ball is randomly extracted from the urn, its number recorded and it is returned into the urn with an additional ball with the same number.
Repeating this procedure $N$ times yields a random vector $X=(X_1,\dots,X_L)$, where $X_k$ is the number of times that a ball with number $k$ has been drawn in the $N$ extractions.
Then $X$ follows the Bose-Einstein statistics with $L$ sites and $N$ particles, see \eg Lemma~2.7 of \cite{Le:Pe}.
This connection of the Bose-Einstein statistics with the $L$-color Pólya urn will be a key ingredient in the proof of Theorem~\ref{teo:BE} below.

Given $q\in\mathcal{P}(\integer_+)$, let $\mu^q\in\mathcal{P}(\integer_+)$ be the geometric distribution supported on $\integer_+$ and parameter $1/(2-q(\{0\}))$ and assume that $B^{L,q}$ follows the Bose-Einstein statistics with $L$ sites and  $(1-q(\{0\}))L$ particles.
To apply Theorem~\ref{teo:mai} we have to show that Condition~\ref{con:1} holds and the map $\psi$ is Lipschitz.
To check Lipschitz property proceeding as in Section~\ref{sec:MB} we get for any $q,q'\in\mathcal{P}(\integer_+)$ 
\begin{equation*}
  \Vert\mu^q-\mu^{q'}\Vert
  \leq 4 \Vert q-q'\Vert.
\end{equation*}
Condition~\ref{con:1} is implied by the following result.

\begin{teo}
  \label{teo:BE}
  Assume that $X$ follows the Bose-Einstein statistics with $L$ sites and $N$ particles.
  Denote with $\gamma_L^N\in\mathcal{P}(\integer_+^2)$ the distribution of $(X_1,X_2)$ and let $\lambda^{N/L}\in\mathcal{P}(\integer_+)$ be the geometric distribution with support $\integer_+$ and parameter $1/(1+N/L)$:
  \begin{equation*}
    \lambda^{N/L}(\{k\})
    :=\ind(k\in\integer_+)\frac{1}{1+N/L}\Big(1-\frac{1}{1+N/L}\Big)^k.
  \end{equation*}
  Then
  \begin{equation*}
    \Vert\gamma_L^N-\lambda^{N/L}\otimes\lambda^{N/L}\Vert
    \leq\frac{14 N}{L^2}.
  \end{equation*}
\end{teo}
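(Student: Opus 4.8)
The plan is to follow the same three-step scheme used in the Maxwell--Boltzmann proof. Writing $\bar\gamma_L^N$ for the one-site marginal $\PP(X_1\in\cdot)$, the triangle inequality together with the identity $\Vert\gamma\otimes\mu-\gamma\otimes\nu\Vert=\Vert\mu-\nu\Vert$ gives
\begin{equation*}
  \Vert\gamma_L^N-\lambda^{N/L}\otimes\lambda^{N/L}\Vert
  \leq\Vert\gamma_L^N-\bar\gamma_L^N\otimes\bar\gamma_L^N\Vert+2\Vert\bar\gamma_L^N-\lambda^{N/L}\Vert,
\end{equation*}
so it suffices to bound each term on the right by a constant multiple of $N/L^2$. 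The starting observation is that the Bose--Einstein law is the law of $L$ i.i.d.\ geometric variables conditioned on their sum being $N$, so that its marginals are of Pólya (Beta--Binomial) type: $\bar\gamma_L^N$ is the distribution of the number of red draws in a Pólya urn started with one red and $L-1$ black balls and drawn $N$ times with reinforcement $+1$, while $\gamma_L^N$ is the analogous three-colour urn (one red, one blue, $L-2$ black). This is the representation on which both couplings are built.

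For the marginal term I would compare the negative-hypergeometric law $\bar\gamma_L^N$ with the geometric target $\lambda^{N/L}$. Reading $\bar\gamma_L^N$ through the equivalent \emph{stars and bars} description, $X_1$ is the number of stars preceding the first bar in a uniformly random arrangement of $N$ stars and $L-1$ bars, i.e.\ a sampling \emph{without} replacement. Sampling the same urn \emph{with} replacement (a star with probability $N/(N+L-1)$ at each draw) produces instead a geometric law, and the two procedures can be run on a common probability space so that they first disagree only when an already drawn ball is redrawn; the probability of such an event over the relevant draws is $O(N/L^2)$. A final comparison between the with-replacement parameter $(L-1)/(N+L-1)$ and the target parameter $L/(N+L)$, which differ by $N/\big((N+L)(N+L-1)\big)$, contributes a further term of the same order, exactly as the Poisson-approximation step did for Maxwell--Boltzmann. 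Together these give $\Vert\bar\gamma_L^N-\lambda^{N/L}\Vert\leq CN/L^2$.

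For the correlation term I would exploit the nested structure of the Bose--Einstein law: conditionally on $X_1$, the variable $X_2$ has law $\bar\gamma_{L-1}^{N-X_1}$. Setting $Y_1:=X_1$ and coupling $X_2$ with a copy $Y_2\sim\bar\gamma_L^N$ through the maximal coupling of $\bar\gamma_{L-1}^{N-X_1}$ and $\bar\gamma_L^N$ (whose $\bar\gamma_L^N$-marginal does not depend on $X_1$, so that $Y_2$ is indeed independent of $Y_1$), one obtains
\begin{equation*}
  \Vert\gamma_L^N-\bar\gamma_L^N\otimes\bar\gamma_L^N\Vert
  \leq\PP\big((X_1,X_2)\neq(Y_1,Y_2)\big)
  =\EE\big[\Vert\bar\gamma_{L-1}^{N-X_1}-\bar\gamma_L^N\Vert\big].
\end{equation*}
The two laws inside the expectation are again Pólya urns differing only by the deletion of one black ball and of $X_1$ draws, so coupling the two urns bounds their total variation distance by a quantity of order $X_1/L$ plus an $O(N/L^2)$ contribution from the change $N\mapsto N-X_1$; since $\EE[X_1]=N/L$, taking expectations yields a bound of order $N/L^2$.

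The main obstacle is precisely this last step: coupling the two Pólya urns and estimating $\EE\big[\Vert\bar\gamma_{L-1}^{N-X_1}-\bar\gamma_L^N\Vert\big]$ so that the rate $N/L^2$ survives \emph{uniformly}, in particular checking that the heavy but rare large values of $X_1$ do not spoil it and that the constants are uniform in $N$ (equivalently in $q$, through $N=(1-q(\{0\}))L$). Once both contributions are assembled with their explicit constants --- the marginal term entering with the factor $2$ above --- the estimates add up to the stated bound $14N/L^2$.
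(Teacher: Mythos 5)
Your skeleton matches the paper's: the same triangle-inequality decomposition into $\Vert\gamma_L^N-\bar\gamma_L^N\otimes\bar\gamma_L^N\Vert+2\Vert\bar\gamma_L^N-\lambda^{N/L}\Vert$, a coupling for the correlation term, and a separate one-dimensional approximation for the marginal term. Your reduction of the correlation term is also sound: conditioning on $X_1=n$ the law of $X_2$ is $\bar\gamma_{L-1}^{N-n}$, and using the maximal coupling conditionally on $X_1$ (whose second marginal $\bar\gamma_L^N$ does not depend on $n$, so $Y_2\perp Y_1$) legitimately gives
\begin{equation*}
\Vert\gamma_L^N-\bar\gamma_L^N\otimes\bar\gamma_L^N\Vert\leq\EE\big[\Vert\bar\gamma_{L-1}^{N-X_1}-\bar\gamma_L^N\Vert\big].
\end{equation*}
But the proof stops exactly where the real work begins, and you say so yourself: the estimate $\EE\big[\Vert\bar\gamma_{L-1}^{N-X_1}-\bar\gamma_L^N\Vert\big]\leq CN/L^2$ is asserted via an unspecified coupling of two P\'olya urns with heuristic error terms ``of order $X_1/L$ plus $O(N/L^2)$''. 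That is the entire content of the paper's argument, and it is not routine: the paper builds an explicit joint realization with two urns (urn A with $L-1$ balls generating $X_2$, urn B with $L$ balls generating $Y_2$), where each draw from A is accompanied by a Bernoulli ``test'' with the carefully chosen success probability $(L+t-1)(1+t_k-f_k)/\big((L+t)(1+t_k)\big)$; this tuning is what makes B an exact $L$-colour P\'olya urn (hence $Y_2\sim\bar\gamma_L^N$ independent of $X_1$) while A reproduces $\bar\gamma_{L-1}^{N-X_1}$. The disagreement event is then controlled by two concrete events (a failed test on a ball $2$ in the first $N-X_1$ draws, and a ball $2$ appearing among the last $X_1$ draws from B), each of probability at most $N/L^2$. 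Without producing such a coupling, or an equivalent direct computation of the total variation distance between the two negative hypergeometric laws with uniform constants, your argument is a plan rather than a proof.

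A secondary divergence: for the marginal term the paper simply invokes Theorem~3 of Brown--Phillips (Stein's method for negative binomial approximation), obtaining $\Vert\bar\gamma_L^N-\lambda^{N/L}\Vert\leq 6/L$, whereas you propose a self-contained with-/without-replacement coupling in the stars-and-bars picture targeting the stronger rate $CN/L^2$. That route is plausible (the first disagreement requires at least one star before the first bar \emph{and} a repeated draw, which is how the factor $N/L^2$ arises) and would arguably be an improvement, but as written it is again only a sketch: you do not control the contribution of large excursions before the first bar, nor quantify the geometric-parameter mismatch beyond stating its order. So: right architecture, genuinely incomplete on both quantitative steps, with the central coupling construction missing.
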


\begin{proof}
  Let $\bar\gamma_L^N:=\PP(X_1\in\cdot)$ be the one site marginal of $\gamma_L^N$.
  As in \eqref{eq:gam} we get
    \begin{equation}
      \label{eq:gam1}
    \Vert\gamma_L^N-\lambda^{N/L}\otimes\lambda^{N/L}\Vert
   \leq \Vert \gamma_L^N-\bar\gamma_L^N\otimes \bar\gamma_L^N\Vert
   +2 \Vert \bar\gamma_L^N-\lambda^{N/L}\Vert.
  \end{equation}
  We bound separately the two terms in the right hand side of inequality \eqref{eq:gam1}.
  For the second one we use Theorem~3 of \cite{Br:Ph} to get
  \begin{equation}
    \label{eq:BE1}
    \Vert \bar\gamma_L^N-\lambda^{N/L}\Vert
    \leq \frac{6}{L}.
  \end{equation}
  To bound the first term in \eqref{eq:gam1} we construct a coupling using the $L$-color Pólya urn with double replacement.
  We define $(X_1,X_2)\sim\gamma_L^N$ and $(Y_1,Y_2)\sim\bar\gamma_L^N\otimes\bar\gamma_L^N$.
    We generate $X_1$ as the 1 site marginal of $X$, namely $X_1\sim\bar\gamma^N_L$ and define $Y_1:=X_1$.

    Given $X_1=n$, to generate $X_2$ and $Y_2$ we consider two urns named urn A and urn B.
    Initially in urn A there are $L-1$ balls numbered from $2$ to $L$, while in urn B there are $L$ balls numbered from $1$ to $L$.
    We distinguish two cases, case $n<N$ and case $n=N$.
    
    \medskip
    \emph{Case} $n<N$.
    We draw a ball from urn A, assume that it is ball $k$, then we \emph{try} to extract the same ball from urn B.
     To this end, independently, we generate a Bernoulli random variable with success probability $(L-1)/L$.
     In case of \emph{success} we extract the ball $k$ from urn B;
     in case of \emph{failure} we extract the ball 1 from urn B.
     The extracted balls are then returned in their urns with a ball with the same number.

     The next extractions are defined inductively.
     Assume that $t$ extractions have been made with $0<t<N-n$.
     For $k\in\{2,\dots,L\}$ let $t_k$ be the number of balls $k$ drawn from urn A in the $t$ extractions and $f_k$, the number of times that the attempt to extract the same ball $k$ from urn B failed in the $t$ extractions.
     For the $t+1$ extraction we draw a ball from urn A and make a \emph{test} by generating a Bernoulli random variable with success probability
    \begin{equation}
      \label{eq:BE4}
      \frac{(L+t-1)(1+t_k-f_k)}{(L+t)(1+t_k)}.
    \end{equation}
     In case of \emph{success} we extract a ball $k$ from urn B;
     in case of \emph{failure} we extract a ball 1 from urn B.
     We then use double replacement.

     We iterate the preceding rule until $t=N-n$.
     Next we go on extracting a ball from urn B with double replacement
     for other $n$ steps.
      Define $X_2$ as the number of times that ball 2 has been drawn from urn A and  $Y_2$ as the \emph{total} number of times that ball 2 has been drawn from urn B.

    \medskip
    \emph{Case} $n=N$.
     We define $X_2=0$.
     To define $Y_2$ we draw a ball from urn B with double replacement.
     Define $Y_2$ as the number of times that ball 2 has been drawn from urn B. 

    \bigskip
    We claim that in both cases $(X_1,X_2)\sim\gamma_L^N$ and $(Y_1,Y_2)\sim\bar\gamma_L^N\otimes\bar\gamma_L^N$.
    The first claim is a Bose-Einstein statistics property: for any $n,m\in\integer_+$ such that $n+m\leq N$
    \begin{equation}
      \label{eq:BE6}
      \begin{split}
           \PP(X_1=n,X_2=m)
      &=\PP(X_2=m\vert X_1=n) \PP(X_1=n)\\
      &=\bar\gamma_L^N(\{n\})\bar\gamma_{L-1}^{N-n}(\{m\})
      =\gamma_L^N(\{n\}\times\{m\}),
      \end{split}
    \end{equation}
    where $\gamma_{L-1}^{0}:=\delta_0$.

    To prove that $(Y_1,Y_2)\sim\bar\gamma_L^N\otimes\bar\gamma_L^N$ note that for any $n,m\in\{0,\dots,N\}$
    \begin{equation*}
      \PP(Y_1=n,Y_2=m)
      =\PP(Y_2=m\vert X_1=n)\PP(X_1=n),
    \end{equation*}
    so the result follows if we can show that
    \begin{equation}
      \label{eq:BE2}
      \PP(Y_2\in\cdot\vert X_1=n)
      =\bar\gamma_L^N,
      \qquad\qquad
      n\in\{0,\dots,N\}.
    \end{equation}
    To prove \eqref{eq:BE2} we will show that $Y_2$, conditionally to $X_1=n$, is the number of balls 2 extracted in $N$ draws from an $L$-color Pólya urn.
    Then, by Lemma~2.7 of \cite{Le:Pe}, after $N$ extractions the number of times that ball 2 has been extracted has $\bar\gamma_L^N$ distribution.
    Let $T_k(t)$, $k\in\{1,\dots,L\}$, be the number of balls $k$ drawn from an $L$-color Pólya urn in $t$ steps.
    Then $T(t):=(T_1(t),\dots,T_L(t))$ is a homogeneous time Markov chain, see for example \cite{Le:Pe} \S2.4, called \emph{Pólya urn process}.

    If $n=N$ \eqref{eq:BE2} holds because by construction B is a Pólya urn.

    Assume that $n<N$.
    To show that B is a Pólya urn we must verify that balls are uniformly drawn from B at each draw.

    To compute the probability to extract a ball $k$ from B we observe that
    if $k\in\{2,\dots,L\}$, $k$ is extracted from B if and only if it is extracted from A and the test is a success, while ball 1 is extracted from B if and only if the test fails.
    So at the first extraction, ball $k$, $k\in\{2,\dots,L\}$ is chosen from urn B with probability
    \begin{equation*}
       \frac{1}{L-1}\frac{L-1}{L}
      =\frac{1}{L}.
    \end{equation*}
    Ball $1$ is chosen with probability $1/L$.
    
    Assume that $t$ extractions have been made with $0<t<N-n$.
     Recall that $t_k$ and $f_k$, $k\in\{2,\dots,L\}$, denote the number of balls $k$ drawn from urn A and the number of times that the attempt to extract the same ball $k$ from urn B failed respectively in the $t$ extractions.
     Then urn A has $t_k+1$ balls $k$ while urn B has $1+f_2+\dots+f_L$ balls 1 and $1+t_k-f_k$ balls $k$, $t_2+\dots+t_L=t$.

      Thus at the $t+1$ extraction, ball $k$ with $k\in\{2,\dots,L\}$ is chosen from urn B with probability
    \begin{equation*}
       \frac{t_k+1}{L-1+t}\frac{(L+t-1)(1+t_k-f_k)}{(L+t)(1+t_k)}
      =\frac{1+t_k-f_k}{L+t}
    \end{equation*}
    while ball $1$ is chosen with probability
    \begin{equation*}
      1-\sum_{k=1}^L \frac{1+t_k-f_k}{L+t}
      =\frac{1+f_2+\dots+f_L}{L+t}.
    \end{equation*}
    In any case the balls are chosen uniformly. 
    We iterate the preceding rule until $t=N-n$.
    At this time the urn B is an $L$-color Pólya urn process after $N-n$ steps.
    Next we go on extracting a ball from urn B with double replacement for other $n$ steps.
    So, after $N$ extraction, B is an $L$-color Pólya urn process after $N$ steps and $Y_2$ is the number of times ball 2 has been extracted in $N$ draws, \ie \eqref{eq:BE2} holds.

    We observe that, as in \eqref{eq:MB3},
    \begin{equation*}
      \Vert\gamma_L^N-\bar\gamma_L^N\otimes\bar\gamma_L^N\Vert
      =\PP(X_2\neq Y_2).
    \end{equation*}
    Define the event $D$ as ``a ball 2 has been drawn from urn A in the first $N-X_1$ extractions and the associated test failed'' and the event $E$ as ``a ball 2 has been drawn from urn B in the last $X_1$ extractions''.
    Then
    \begin{equation}
      \label{eq:BE5}
      \PP(X_2\neq Y_2)
      \leq \PP(D\cup E)
      \leq\sum_{n=0}^N \PP(D|X_1=n)\PP(X_1=n)+\sum_{n=0}^N \PP(E|X_1=n)\PP(X_1=n).
    \end{equation}
    If  $n=N$ the first term in \eqref{eq:BE5} is zero.
    If $n<N$ we write
    \begin{equation}
      \label{eq:BE8}
      \PP(D|X_1=n)
      =\sum_{j=1}^{N-n}\PP(D|X_1=n,X_2=j) \PP(X_2=j|X_1=n).
    \end{equation}
    Observe that by \eqref{eq:BE6}
    \begin{equation}
      \label{eq:BE7}
      \PP(X_2=j|X_1=n)
      =\bar\gamma_{L-1}^{N-n}(\{j\}),
    \end{equation}
    and that if $X_1=n$ and $X_2=j$ the event $D^c$ occurs if and only if all the $j$ tests associated to the extractions of a ball 2 are success.
    By \eqref{eq:BE4} the probability that the test at extraction $t$ is a success, if all the preceding tests are successful (\ie $f_2=0$), is 
    \begin{equation*}
      \frac{L+t-1}{L+t}\geq
      1-\frac{1}{L}.
    \end{equation*}
    Thus
    \begin{equation}
      \PP(D|X_1=n,X_2=j)
      =1-\PP(D^c|X_1=n,X_2=j)
      \leq 1- \Big(1-\frac{1}{L}\Big)^j
      \leq \frac{j}{L}.
    \end{equation}
    By plugging this bound and equation \eqref{eq:BE7} into equation \eqref{eq:BE8}
 we get
    \begin{equation*}
      \PP(D|X_1=n)
      \leq \frac{1}{L}\sum_{j=1}^{N-n}j\bar\gamma_{L-1}^{N-n}(\{j\})
      =\frac{N-n}{L(L-1)}.
    \end{equation*}
    Thus
    \begin{equation}
      \label{eq:BE9}
      \sum_{n=0}^N \PP(D|X_1=n)\PP(X_1=n)
      \leq \frac{N}{L(L-1)}-\frac{N}{L^2(L-1)}
      =  \frac{N}{L^2}.
    \end{equation}
    
    We consider the second term in \eqref{eq:BE5}.
    If $X_1=n$, B is an $L$-color Pólya urn from which $N-n$ balls have been drawn.
    Because the $L$-color Pólya process is an homogeneous Markov chain, the number of times that a ball 2 will be draw in the last $n$ extractions has distribution $\bar\gamma_{L}^n$.
    Thus 
    \begin{equation*}
      \PP(E|X_1=n)
      =1-\bar\gamma_L^n(\{0\})
      =1-\frac{\binom{n+L-2}{n}}{\binom{n+L-1}{n}}
      =\frac{n}{n+L-1}
      \leq \frac{n}{L}.
    \end{equation*}
    and
    \begin{equation*}
      \sum_{n=0}^N\PP(E|X_1=n)\PP(X_1=n)
      \leq \frac{N}{L^2}.
    \end{equation*}
    By plugging this bound and the bound \eqref{eq:BE9}  into equation \eqref{eq:BE5} we get
    \begin{equation*}
      \Vert\gamma_L^N-\bar\gamma_L^N\otimes\bar\gamma_L^N\Vert
      \leq\PP(X_2\neq Y_2)
      \leq  \frac{2N}{L^2}.
    \end{equation*}
    The above estimate, together with \eqref{eq:BE1} concludes the proof.
\end{proof}

\section{Equilibrium properties of the nonlinear process}

In this section we study the long time behavior of the $\psi$-nonlinear process, corresponding to the GRBB process.
We will introduce some technical hypothesis on the nonlinear process which are satisfied in all the examples of Section~\ref{sec:app}.

We need some additional notation.
Given $\mu\in\mathcal{P}(\integer_+)$ we denote with $m_\mu$ the mean of $\mu$, with $\sigma^2_\mu$ its variance and with $\hat\mu$ its characteristic function.
 
The condition below is the analogue, for the nonlinear process, of equation \eqref{eq:co}, which assures the conservation of particles for the GRBB process.
\begin{con}
  \label{con:2}
  Assume that $\EE(B^q)=1-q(\{0\})$  for any $q\in\mathcal{P}(\integer_+)$ and that the map $\psi\colon\mathcal{P}(\integer_+)\to\mathcal{P}(\integer_+)$ depends only on $q(\{0\})$.   
\end{con}

\begin{re}
  If Condition~\ref{con:1} and equation \eqref{eq:co} hold, Condition~\ref{con:2} is equivalent to uniform integrability of the family $\{B_1^{L,q}\}_{L\in\natural}$.
  In fact Condition~\ref{con:1} implies that $B_1^{L,q}\Rightarrow B^q$ as $L\to+\infty$ and by equation \eqref{eq:co}:  $\EE(B_1^{L,q})=1-q(\{0\})$.
\end{re}

Particles conservation of the GRBB process gives conservation of the mean of the corresponding nonlinear process as explained in the following lemma.
\begin{lemma}
  \label{le:r}
  Assume Condition \ref{con:2} and $\EE(\eta(0))=r\in[0,+\infty]$.
  Then $\EE(\eta(t))=r$, $\forall t\geq0$.
\end{lemma}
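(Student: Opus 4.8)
The plan is to prove the conservation of the mean directly from the recursion \eqref{eq:nl1} defining the nonlinear process. The key observation is that by Definition~\ref{de:nlp}, if $q$ denotes the distribution of $\eta(t)$, then $\eta(t+1)=\eta(t)-\ind(\eta(t)>0)+B^q$, where $B^q$ is independent of $\eta(t)$ and has distribution $\psi(q)=\mu^q$. Taking expectations and using linearity, I would write
\begin{equation*}
  \EE(\eta(t+1))
  =\EE(\eta(t))-\EE(\ind(\eta(t)>0))+\EE(B^q).
\end{equation*}
The middle term is exactly $\PP(\eta(t)>0)=1-q(\{0\})$, since $q(\{0\})=\PP(\eta(t)=0)$.

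The crucial input is Condition~\ref{con:2}, which asserts $\EE(B^q)=1-q(\{0\})$ for every $q\in\mathcal{P}(\integer_+)$. Substituting both expressions, the two terms $-\big(1-q(\{0\})\big)$ and $+\big(1-q(\{0\})\big)$ cancel exactly, leaving
\begin{equation*}
  \EE(\eta(t+1))
  =\EE(\eta(t)).
\end{equation*}
Thus the mean is preserved from one step to the next, and a trivial induction on $t$ starting from $\EE(\eta(0))=r$ gives $\EE(\eta(t))=r$ for all $t\geq0$. This mirrors, at the level of the single-site nonlinear process, the particle-conservation identity \eqref{eq:co} used in the proof of Theorem~\ref{teo:mai}, where $\sum_x\eta_x^L(t)$ was shown to be constant in $t$.

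The only genuine subtlety, and the point I would be careful about, is the case $r=+\infty$ allowed by the hypothesis $r\in[0,+\infty]$. When $\EE(\eta(t))=+\infty$ the cancellation argument must be read correctly: the subtracted and added terms $1-q(\{0\})$ are both finite (bounded by $1$), so they do not create an $\infty-\infty$ ambiguity, and one concludes $\EE(\eta(t+1))=+\infty$ as well. When $r<+\infty$ one should note that $\EE(\eta(t))<+\infty$ propagates through the induction, since adding the independent increment $B^q$ with finite mean $1-q(\{0\})\leq1$ keeps the expectation finite; this legitimizes the term-by-term splitting of the expectation above. No further estimates are needed, so the argument is essentially a one-line computation made rigorous by tracking finiteness and invoking Condition~\ref{con:2}.
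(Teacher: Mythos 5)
Your proof is correct and follows essentially the same route as the paper: the paper conditions on $\eta(t)$ to get $\EE(\eta(t+1)\vert\eta(t))=\eta(t)-\ind(\eta(t)>0)+\PP(\eta(t)>0)$ and then takes expectations, which is the same cancellation of $\PP(\eta(t)>0)$ against $\EE(B^q)=1-q(\{0\})$ that you carry out via linearity, followed by induction. Your treatment of the $r=+\infty$ case (a finite-mean increment added to an infinite-mean variable) also matches the paper's remark.
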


\begin{proof}
  The proof is obtained by induction.
  Assume first that $\EE(\eta(t))=r<+\infty$.
  By equation \eqref{eq:nl1}:
  \begin{equation*}
    \EE\left(\eta(t+1)|\eta(t)\right)
    =\eta(t)-\ind(\eta(t)>0)+\PP(\eta(t)>0)
  \end{equation*}
  Then
    \begin{equation*}
      \EE\left(\eta(t+1)\right)=
    \EE\left[\EE\left(\eta(t+1)|\eta(t)\right)\right]
    =r.
  \end{equation*}
  When $r=+\infty$, again for equation \eqref{eq:nl1}, we have that $\eta(t+1)$ is obtained by adding a finite mean random variable to an infinite mean one.
\end{proof}

To study the long time behavior of the $\psi$-nonlinear process we introduce the following discrete time queue process.

\begin{de}
  Let $\mu\in\mathcal{P}(\integer_+)$.
  The G$_\mu$/D/1 queue $(\zeta(t))_{t\geq0}$ is the Markov chain with values in $\integer_+$ defined as follows.
  Assume that, for some $t\geq0$, $\zeta(t)=\zeta\in\integer_+$ then 
  \begin{equation}
  \label{eq:gd1}
  \zeta(t+1)=\zeta-\ind(\zeta>0)+B.
\end{equation}
$B$ is a random variable with distribution $\mu$.
It is independent from everything and independently generated at each time step $t$.
\end{de}

The long time behavior of the G$_\mu$/D/1 queue and its invariant probability measure are described in the theorem below.

\begin{teo}
  \label{teo:md1}
  The G$_\mu$/D/1 queue,  with $m_\mu<1$, is an aperiodic irreducible positive persistent Markov chain.
   Its invariant probability measure $\pi_\mu$ has characteristic function
  \begin{equation}
    \label{eq:cf1}
      \hat\pi_\mu(x)
      =\frac{(1-m_\mu)\hat\mu(x)(e^{i x}-1)}{e^{ix}-\hat{\mu}(x)},
      \qquad\qquad
      x\in\real,
  \end{equation}
  and
  \begin{equation}
    \label{eq:cf2}
    m_{\pi_\mu}
    =\frac{\sigma_\mu^2+m_\mu(1-m_\mu)}{2(1-m_\mu)}.
  \end{equation}
\end{teo}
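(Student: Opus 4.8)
The plan is to recognize the recursion \eqref{eq:gd1} as a Lindley-type reflected recursion and to extract the stationary law through its generating function. Since $\zeta-\ind(\zeta>0)=(\zeta-1)^+$ for $\zeta\in\integer_+$, the dynamics read $\zeta(t+1)=(\zeta(t)-1)^++B(t)$ with $B(t)\sim\mu$ independent of $\zeta(t)$. I would first dispose of the qualitative claims. Note that $m_\mu<1$ forces $\mu(\{0\})>0$ (an $\integer_+$-valued variable with mean below $1$ must put mass on $0$); hence the empty state carries a self-loop, which gives aperiodicity, and the chain can descend one step at a time, which yields irreducibility on the communicating class. For positive persistence I would invoke a Foster--Lyapunov drift argument with the linear test function $V(\zeta)=\zeta$: from \eqref{eq:gd1}, $\EE[V(\zeta(t+1))-V(\zeta(t))\mid\zeta(t)=\zeta]=-\ind(\zeta>0)+m_\mu$, which equals $m_\mu-1<0$ for every $\zeta\geq1$ and is finite at $\zeta=0$. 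Foster's criterion then delivers positive recurrence, so a unique invariant law $\pi_\mu$ exists.

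The heart of the proof is the computation of $\hat\pi_\mu$. Working at stationarity, let $\zeta\sim\pi_\mu$ and $B\sim\mu$ be independent and set $\pi_0:=\pi_\mu(\{0\})$. I would record the boundary identity $\EE[e^{ix(\zeta-1)^+}]=\pi_0+e^{-ix}(\hat\pi_\mu(x)-\pi_0)$, obtained by splitting on $\{\zeta=0\}$ and $\{\zeta\geq1\}$. Invariance of $\pi_\mu$ under \eqref{eq:gd1} together with the independence of $\zeta$ and $B$ gives the functional equation $\hat\pi_\mu(x)=\hat\mu(x)[\pi_0+e^{-ix}(\hat\pi_\mu(x)-\pi_0)]$, and solving for $\hat\pi_\mu(x)$ yields $\hat\pi_\mu(x)=\pi_0\,\hat\mu(x)(e^{ix}-1)/(e^{ix}-\hat\mu(x))$, which is \eqref{eq:cf1} once $\pi_0$ is identified. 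To pin down $\pi_0$ I would impose the normalization $\hat\pi_\mu(0)=1$: both $e^{ix}-1$ and $e^{ix}-\hat\mu(x)$ vanish at $x=0$, so a first-order expansion ($e^{ix}-1\sim ix$ and $e^{ix}-\hat\mu(x)\sim ix(1-m_\mu)$) turns the removable singularity into $\pi_0/(1-m_\mu)=1$, that is $\pi_0=1-m_\mu$. Here $m_\mu<1$ is precisely what makes this value positive and the whole computation consistent with positive recurrence.

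Finally, for the mean \eqref{eq:cf2} I would extract $m_{\pi_\mu}=-i\,\hat\pi_\mu'(0)$ from a second-order Taylor expansion of \eqref{eq:cf1} at $x=0$, using $\hat\mu(x)=1+im_\mu x-\tfrac12\EE(B^2)x^2+\cdots$ with $\EE(B^2)=\sigma_\mu^2+m_\mu^2$. After cancelling the common factor responsible for the $0/0$ behaviour and collecting the coefficient of $x$, one gets $m_{\pi_\mu}=m_\mu+\EE[B(B-1)]/(2(1-m_\mu))$, and substituting $\EE[B(B-1)]=\sigma_\mu^2+m_\mu^2-m_\mu$ simplifies to the claimed $(\sigma_\mu^2+m_\mu(1-m_\mu))/(2(1-m_\mu))$.

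The main obstacle is the bookkeeping around the removable singularity at $x=0$: both the normalization and the moment extraction require expanding numerator and denominator to matching order and checking the limits with care. It may be cleaner to carry out the identical computation with the probability generating function $P(z)=\EE(z^\zeta)$ in a neighbourhood of $z=1$, where the invariance equation reads $P(z)[z-M(z)]=\pi_0(z-1)M(z)$ with $M(z)=\EE(z^B)$, where $P'(1)=m_{\pi_\mu}$ is obtained directly by a Taylor expansion at $z=1$, and only at the very end substitute $z=e^{ix}$ to recover \eqref{eq:cf1}.
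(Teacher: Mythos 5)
Your proposal is correct and follows essentially the same route as the paper: derive the functional equation $\hat\pi_\mu(x)=\hat\mu(x)\bigl[\pi_0(1-e^{-ix})+e^{-ix}\hat\pi_\mu(x)\bigr]$ from invariance, solve for $\hat\pi_\mu$, identify $\pi_0=1-m_\mu$ by the normalization $\hat\pi_\mu(0)=1$, and extract the mean by expanding at $x=0$. The only difference is that you substantiate positive recurrence with an explicit Foster--Lyapunov drift argument where the paper simply asserts it ``follows directly by the dynamics''; that is a welcome extra level of rigour, not a divergence in method.
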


    \begin{proof}
      By Markov inequality $1-\mu(\{0\})\leq m_\mu<1$ so that $\mu(\{0\})>0$ and the chain is aperiodic.
      The irreducibility and positive persistence follow directly by the dynamics of the G$_\mu$/D/1 queue.
      Let $\pi_\mu$ be the invariant probability measure of the G$_\mu$/D/1 queue $(\zeta(t))_{t\geq0}$, then by invariance and \eqref{eq:gd1}
      \begin{multline*}
        \hat\pi_\mu(x)
        =\sum_{\zeta}\pi_\mu(\{\zeta\}) e^{ix\zeta}
        =\sum_{\zeta}\pi_\mu(\{\zeta\})\EE_\zeta\big[e^{ix(\zeta(1))}\big]
        =\hat\mu(x)\sum_{\zeta}\pi_\mu\big(\{\zeta\}\big)e^{ix(\zeta-\ind(\zeta>0))}\\
        =\hat\mu(x)\big[\pi_\mu (\{0\})(1-e^{-ix})+\hat \pi_\mu (x) e^{-ix}\big].
      \end{multline*}
      Thus
      \begin{equation*}
              \hat \pi_\mu (x)
      =\frac{e^{i x}-1}{e^{ix}-\hat{\mu}(x)}\,\pi_\mu (\{0\})\hat\mu(x).
      \end{equation*}
      Taking the limit for $x\to0$ in the above equation we get
      \begin{equation*}
        1
        =\hat \pi_\mu (0)
        =\frac{\pi_\mu (\{0\})}{1-m_\mu},
      \end{equation*}
      so that $\pi_\mu$ has characteristic function given by \eqref{eq:cf1}.
      Equation \eqref{eq:cf2} follows from \eqref{eq:cf1} by computing $\hat\pi_\mu'(0)$.
    \end{proof}

The next lemma links the $\psi$-nonlinear process starting from $\pi_\mu$ with the G$_\mu$/D/1 queue.

\begin{lemma}
  Given $\mu\in\mathcal{P}(\integer_+)$ with $m_\mu<1$ consider a $\psi$-nonlinear process starting from $\pi_\mu$.
  If $\psi(\pi_\mu)=\mu$ then the $\psi$-nonlinear process is the G$_\mu$/D/1 queue.
\end{lemma}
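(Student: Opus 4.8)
The plan is to argue by induction that the one-dimensional marginal of the $\psi$-nonlinear process stays equal to $\pi_\mu$ at every time step, and then to observe that this forces the innovation variable to have law $\mu$ at each step, which is exactly the transition rule of the G$_\mu$/D/1 queue.

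First I would recall the distributional update $F$ from \eqref{eq:qn}: for the $\psi$-nonlinear process, if $Q(t)=q$ then $Q(t+1)=F(q)$, where $F(q)$ is assembled from $q$ and from the law $\psi(q)$ of the innovation $B^q$. The G$_\mu$/D/1 queue \eqref{eq:gd1} has the same functional form but with the fixed innovation law $\mu$; denote its one-step evolution operator by $G_\mu$, so that $G_\mu(p)(\{n\})=p(\{0\})\mu(\{n\})+\sum_{k=1}^{n+1}p(\{k\})\mu(\{n-k+1\})$. The key remark is that when $q=\pi_\mu$ the hypothesis $\psi(\pi_\mu)=\mu$ gives $B^{\pi_\mu}\sim\mu$, whence $F(\pi_\mu)=G_\mu(\pi_\mu)$.

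Next I would invoke Theorem~\ref{teo:md1}: since $m_\mu<1$, the measure $\pi_\mu$ is the invariant measure of the G$_\mu$/D/1 queue, i.e.\ $G_\mu(\pi_\mu)=\pi_\mu$. Combined with the previous identity this yields $F(\pi_\mu)=\pi_\mu$, so $\pi_\mu$ is a fixed point of the nonlinear distributional map. An immediate induction then shows $Q(t)=\pi_\mu$ for all $t\geq0$: it holds at $t=0$ by assumption, and $Q(t)=\pi_\mu$ implies $Q(t+1)=F(Q(t))=F(\pi_\mu)=\pi_\mu$.

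Finally, because $Q(t)=\pi_\mu$ at every time, the innovation $B^{Q(t)}$ always has law $\psi(\pi_\mu)=\mu$, so the update \eqref{eq:nl1} reduces to $\eta(t+1)=\eta(t)-\ind(\eta(t)>0)+B$ with $B\sim\mu$, which is precisely the transition \eqref{eq:gd1} of the G$_\mu$/D/1 queue. Sharing the initial law $\pi_\mu$ and the same (now time-homogeneous) transition kernel, the two processes coincide in law. The only delicate point is that the nonlinear dynamics is self-referential---the innovation law depends on the current marginal---so one must establish that the marginal is frozen at $\pi_\mu$ before replacing $\psi(Q(t))$ by $\mu$; the invariance supplied by Theorem~\ref{teo:md1} is exactly what closes this loop, and no genuine obstacle remains beyond this observation.
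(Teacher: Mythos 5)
Your proposal is correct and follows essentially the same route as the paper's own proof: fix the marginal at $\pi_\mu$ by induction, using $\psi(\pi_\mu)=\mu$ together with the invariance of $\pi_\mu$ for the G$_\mu$/D/1 queue (Theorem~\ref{teo:md1}) to close the inductive step, and conclude that the innovation law is frozen at $\mu$ so the transition coincides with \eqref{eq:gd1}. You merely spell out more explicitly the fixed-point identity $F(\pi_\mu)=G_\mu(\pi_\mu)=\pi_\mu$ that the paper leaves implicit.
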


\begin{proof}
  Assume that $\eta(t)\sim\pi_{\mu}$ for a $t\geq0$.
  Then \eqref{eq:nl1}, as $q=\pi_\mu$, holds with $B^q\sim\mu$.
  Thus equation \eqref{eq:nl1} defines the one step evolution of the G$_\mu$/D/1 queue with arrival distribution $\mu$and $\eta(t+1)\sim\pi_\mu$.
\end{proof}

Below we state a lemma which assures a uniform bound on exponential moments of the G$_\mu$/D/1 queue and will be used in the proof of Theorem~\ref{teo:st}.

\begin{lemma}
  \label{lem:ui}
    Let $(\zeta(t))_{t\geq0}$ be  the G$_\mu$/D/1 queue with $m_\mu<1$.
    Then there exist a positive constant $\lambda_\mu$, such that for
    any $\lambda\in[0,\lambda_\mu]$ there is a positive constant $C$, depending only on $\lambda$ and $\mu$ such that
    \begin{equation*}
      \EE_\zeta(e^{\lambda\zeta(t)})\leq
      C e^{\lambda\zeta},
    \end{equation*}
    for any $\zeta\in\integer_+$.
\end{lemma}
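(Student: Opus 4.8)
The plan is to establish a geometric Foster--Lyapunov drift inequality for the exponential test function $V(\zeta):=e^{\lambda\zeta}$ and then to iterate it in time. Write $M(\lambda):=\EE(e^{\lambda B})$ for the moment generating function of the increment distribution $\mu$. Since $m_\mu<1$ and $\mu$ admits a finite exponential moment in a right neighbourhood of the origin (which holds in all the examples of Section~\ref{sec:app} and is implicit in the technical hypotheses of this section), the map $\lambda\mapsto M(\lambda)$ is finite and right-differentiable at $0$ with $M(0)=1$ and $M'(0^{+})=m_\mu$. Hence $\theta(\lambda):=e^{-\lambda}M(\lambda)$ satisfies $\theta(0)=1$ and $\theta'(0^{+})=m_\mu-1<0$, so by continuity one can fix a constant $\lambda_\mu>0$ such that, for every $\lambda\in(0,\lambda_\mu]$, both $M(\lambda)<+\infty$ and $\rho:=\rho(\lambda):=e^{-\lambda}M(\lambda)<1$. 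This is precisely the step where the subcriticality assumption $m_\mu<1$ is used, and I expect the only genuinely delicate point to be the finiteness of $M$ near $0$ (the existence of exponential moments), which is what forces $\lambda_\mu>0$; everything after it is routine.

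First I would compute the one-step drift directly from the recursion \eqref{eq:gd1}. If $\zeta\geq1$ then $\zeta(t+1)=\zeta-1+B$, so $\EE_\zeta(e^{\lambda\zeta(1)})=e^{\lambda\zeta}e^{-\lambda}M(\lambda)=\rho\,V(\zeta)$, whereas if $\zeta=0$ then $\zeta(1)=B$ and $\EE_0(e^{\lambda\zeta(1)})=M(\lambda)$. Setting $b:=M(\lambda)\big(1-e^{-\lambda}\big)$, these two identities combine (using the Markov property at each step) into the geometric drift inequality
\begin{equation*}
  \EE\big(V(\zeta(t+1))\,\big|\,\zeta(t)\big)
  \leq \rho\, V(\zeta(t))+b\,\ind(\zeta(t)=0)
  \leq \rho\, V(\zeta(t))+b .
\end{equation*}

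Finally I would take $\EE_\zeta$ on both sides and iterate the resulting linear recursion $\EE_\zeta(V(\zeta(t+1)))\leq\rho\,\EE_\zeta(V(\zeta(t)))+b$, started from $V(\zeta(0))=e^{\lambda\zeta}$. Summing the geometric series gives
\begin{equation*}
  \EE_\zeta\big(e^{\lambda\zeta(t)}\big)
  \leq \rho^{t}e^{\lambda\zeta}+b\,\frac{1-\rho^{t}}{1-\rho}
  \leq e^{\lambda\zeta}+\frac{b}{1-\rho}.
\end{equation*}
Since $\lambda\geq0$ and $\zeta\in\integer_+$ imply $e^{\lambda\zeta}\geq1$, the additive constant can be absorbed into the multiplicative one, yielding $\EE_\zeta(e^{\lambda\zeta(t)})\leq C\,e^{\lambda\zeta}$ with $C:=1+b/(1-\rho)$. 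This constant depends only on $\lambda$ and $\mu$ (through $M(\lambda)$ and $\rho$) and is uniform in both $t$ and $\zeta$, which is exactly the asserted bound.
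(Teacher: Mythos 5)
Your proof is correct and follows essentially the same route as the paper: the paper also establishes the geometric drift inequality $Pf(\zeta)-f(\zeta)\leq-\gamma f(\zeta)+C$ for $f(\zeta)=e^{\lambda\zeta}$, with $\hat\mu(-i\lambda)$ playing the role of your $M(\lambda)$ and $\gamma=1-\rho$, and then iterates it in $t$ exactly as you do. The caveat you flag --- that $m_\mu<1$ alone does not guarantee finiteness of $M(\lambda)$ for small $\lambda>0$ --- is equally implicit in the paper's choice of $\lambda_\mu$ with $e^{-\lambda}\hat\mu(-i\lambda)\in(0,1)$, so you are not introducing any new gap.
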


\begin{proof}
  As $m_\mu<1$, we can find $\lambda_\mu$ such that $e^{-\lambda}\hat\mu(-i\lambda)\in(0,1)$ for any $\lambda\in(0,\lambda_\mu]$.
  For  $\lambda\in(0,\lambda_\mu]$ define $f(\zeta):=e^{\lambda \zeta}$ and let $P$ be the transition matrix of the Markov chain $(\zeta(t))_{t\geq0}$.
  Define $\gamma:=1-e^{-\lambda}\hat\mu(-i\lambda)$ and $C:=\hat\mu(-i\lambda)(1-e^{-\lambda})$.
  Then
    \begin{equation*}
    Pf(\zeta)=
    \EE_\zeta(e^{\lambda\zeta(1)})=
    e^{\lambda(\zeta-\ind(\zeta>0))}\hat\mu(-i\lambda).
  \end{equation*}
  Thus
    \begin{equation*}
   Pf(\zeta)-f(\zeta)=
    \begin{cases}
          \hat\mu(-i\lambda)-1 & \text{if $\zeta=0$,}\\
              e^{\lambda\zeta}\big(e^{-\lambda}\hat\mu(-i\lambda)-1\big) & \text{if $\zeta>0$}
    \end{cases}
  \end{equation*}
  and
    \begin{equation}
    \label{eq:gc}
    Pf(\zeta)-f(\zeta)\leq
    -\gamma f(\zeta)+C.
  \end{equation}
  Iterating \eqref{eq:gc} we obtain
  \begin{equation*}
    P^tf(\zeta)\leq
    (1-\gamma)^tf(\zeta)+\frac{C}{\gamma}
    \qquad\qquad
    t\geq0,
  \end{equation*}
  and 
  \begin{equation*}
    \EE_\zeta(e^{\lambda\zeta(t)})
    \leq C e^{\lambda\zeta}.
  \end{equation*}
  The case $\lambda=0$ is trivial.
\end{proof}

The next technical condition is a thinning condition of the family $\{\mu^q\}_{ q\in\mathcal{P}(\integer_+)}$.
It holds for any of the applications of Section~\ref{sec:app}, see Remark~\ref{re:agg} below.

\begin{con}
  \label{con:3}
  For any $q\in\mathcal{P}(\integer_+)$ let $\mu^q$ be the distribution of $B^q$ and $X_1,X_2,\dots$ be independent Bernoulli random variables, independent from $B^q$.
  Then $\sum_{k=1}^{B^q}X_k$, where $\sum_{1}^0:=0$, has distribution $\mu^{q'}$ for some $q'\in\mathcal{P}(\integer_+)$.
\end{con}

Condition~\ref{con:3} is used in the next theorem to prove that the nonlinear process weakly converges to a unique stationary distribution.

\begin{teo}
  \label{teo:st}
  Assume that $\EE(\eta(0))=r\in[0,1)$, $\EE(e^{\lambda\eta(0)})<+\infty$ for some $\lambda>0$, Conditions \ref{con:2} and \ref{con:3} hold and that there exists a unique $\bar\pi\in\{\pi_{\mu^q}\}_{ q\in\mathcal{P}(\integer_+)}$ such that $m_{\bar\pi}=r$.
  Then $\eta(t)\Rightarrow\bar\pi$ as $t\to+\infty$.
\end{teo}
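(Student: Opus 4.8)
The plan is to prove the stronger statement that $Q(t)$ converges to $\bar\pi$, by establishing relative compactness of the orbit $(Q(t))_{t\geq0}$ in $\mathcal{P}(\integer_+)$ together with an identification of its weak limit points as the unique mean-$r$ fixed point of the one-step map $F$ of \eqref{eq:qn}. I would first record two consequences of the conservation of mean. By Lemma~\ref{le:r}, $\EE(\eta(t))=r<1$ for every $t$, so Markov's inequality yields simultaneously the tightness of $\{Q(t)\}$ and the uniform emptiness bound $Q(t)(\{0\})=\PP(\eta(t)=0)\geq 1-r>0$. The latter says that along the whole trajectory the arrival law $\psi(Q(t))=\mu^{Q(t)(\{0\})}$ has mean $1-Q(t)(\{0\})\leq r<1$, i.e. the arrivals are \emph{uniformly} subcritical. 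Feeding this uniform subcriticality into the exponential Foster--Lyapunov computation underlying Lemma~\ref{lem:ui}, now uniform in the arrival law (using the uniform exponential integrability of the family $\{\mu^s\}$, as holds in every example of Section~\ref{sec:app}), produces a single $\lambda>0$ with $\sup_t\EE(e^{\lambda\eta(t)})<+\infty$. Hence $\{\eta(t)\}$ is uniformly integrable and $(Q(t))$ is relatively compact with every weak limit point of mean $r$.

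Next I would pin down the fixed points of $F$. On the discrete space $\integer_+$ weak convergence is equivalent to pointwise convergence of the mass functions, so on the tight family above the evaluation $q\mapsto q(\{0\})$ is continuous; since $\psi$ is Lipschitz and depends only on $q(\{0\})$ by Condition~\ref{con:2}, the map $F$ is weakly continuous. A measure $q$ with $q(\{0\})>0$ satisfies $F(q)=q$ if and only if it is invariant for the linear G$_{\mu^{q(\{0\})}}$/D/1 queue, that is $q=\pi_{\mu^{q(\{0\})}}$ by Theorem~\ref{teo:md1}; conversely each $\pi_{\mu^s}$ is fixed, because $\pi_{\mu^s}(\{0\})=1-m_{\mu^s}=s$ by Theorem~\ref{teo:md1} and Condition~\ref{con:2}, so the self-consistency closes. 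Thus the fixed-point set is exactly $\{\pi_{\mu^s}\colon s\in(0,1]\}$, with means $g(s):=m_{\pi_{\mu^s}}$ given by \eqref{eq:cf2}, and by hypothesis $\bar\pi=\pi_{\mu^{s^*}}$ is the unique fixed point of mean $r$.

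It remains to show that every weak limit point $\rho$ of $(Q(t))$ is a fixed point of $F$; together with the previous paragraph and $m_\rho=r$ this forces $\rho=\bar\pi$, hence $Q(t)\Rightarrow\bar\pi$. By relative compactness and continuity of $F$ the $\omega$-limit set $\Omega$ of the orbit is nonempty, compact and strongly invariant, $F(\Omega)=\Omega$, with all elements of mean $r$. To collapse $\Omega$ to the single fixed point I would couple, via the thinning Condition~\ref{con:3}, the nonlinear evolution started inside $\Omega$ with the stationary G$_{\mu^{s^*}}$/D/1 queue $\zeta$, with $\zeta(t)\sim\bar\pi$: at each step use a maximal coupling of the arrivals $\mu^{Q(t)(\{0\})}$ and $\mu^{s^*}$, which agree with probability $1-\Vert\mu^{Q(t)(\{0\})}-\mu^{s^*}\Vert$, and on the coincidence event the common driving together with the Lindley map $x\mapsto(x-1)^+$ makes the gap $\vert\eta(t)-\zeta(t)\vert$ nonincreasing and strictly decreasing whenever the lower queue is empty, an event of probability at least $1-r$ uniformly. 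The thinning property guarantees that the mismatched arrival laws can be realized monotonically on a common space, so that the discrepancy created on the rare non-coincidence events stays integrable.

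The main obstacle is precisely this last step: converting \emph{uniqueness} of the mean-$r$ fixed point into \emph{global attraction}, equivalently showing that the emptiness probability $Q(t)(\{0\})$ converges to $s^*$ so that the coupled arrival laws become asymptotically identical and coalescence prevails. The difficulty is intrinsic to the nonlinearity, since the arrival law tracks the evolving law $Q(t)$ rather than being frozen; and because $\EE(\eta(t))=r=m_{\bar\pi}$ for all $t$, no one-sided stochastic domination of $\eta(t)$ by $\bar\pi$ (or the reverse) can hold without equality, which rules out a naive monotone coupling. I expect to resolve it by combining the $F$-invariance of $\Omega$, the conservation of mean, and the thinning coupling into a coupling-from-the-past argument along a bi-infinite orbit inside $\Omega$, so that the uniform emptiness bound $1-r$ forces coalescence back to $\bar\pi$ and hence $\Omega=\{\bar\pi\}$.
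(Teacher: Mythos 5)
Your proposal gets the preparatory steps right (tightness from Lemma~\ref{le:r}, the identification of the fixed points of $F$ with $\{\pi_{\mu^s}\}_{s\in(0,1]}$ via Theorem~\ref{teo:md1} and the self-consistency $\pi_{\mu^s}(\{0\})=1-m_{\mu^s}=s$, and the role of uniform integrability in forcing every weak limit point to have mean $r$), but it does not close the one step on which the theorem actually rests: that every subsequential weak limit of $(Q(t))_{t\geq0}$ is a \emph{fixed point} of $F$, rather than merely an element of an $F$-invariant $\omega$-limit set. You name this yourself as ``the main obstacle'' and only sketch a hoped-for coupling-from-the-past/coalescence argument without carrying it out, so as written the proof is incomplete precisely where it needs to be complete. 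A secondary issue: your route to $\sup_t\EE(e^{\lambda\eta(t)})<+\infty$ invokes ``uniform exponential integrability of the family $\{\mu^s\}$, as holds in every example,'' which is not among the hypotheses of the theorem.

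The paper resolves both points without any global-attraction argument. For the identification of limit points it writes the one-step recursion for characteristic functions, $\EE(e^{ix\eta(t+1)})=\hat\mu^{q(t)}(x)\,\EE\big(e^{ix(\eta(t)-\ind(\eta(t)>0))}\big)$, passes to the limit along a convergent subsequence, and solves the resulting functional equation for the limit law $\bar q$, obtaining exactly the characteristic function \eqref{eq:cf1} of $\pi_{\mu^{\bar q}}$; hence every limit point already lies in $\{\pi_{\mu^q}\}_q$, and the uniqueness hypothesis plus $m_{\bar q}=r$ finishes the proof. (This limit passage does tacitly use that $q(\bar t+1)$ converges to the same limit as $q(\bar t)$ --- the very periodic-orbit worry you raise --- but the intended mechanism is this algebraic identification, not dynamical coalescence.) For uniform integrability, the paper uses Condition~\ref{con:3} exactly where you use it for coalescence, but for \emph{domination} instead: since $\PP(\eta(t)>0)\leq r$, each arrival $B_t\sim\mu^{q(t)}$ is realized as a Bernoulli thinning of a single i.i.d.\ stream $B^{q_r}_t\sim\mu^{q_r}$ with $q_r(\{0\})=1-r$, so $\eta(t)\leq\zeta(t)$ a.s.\ for one fixed G$_{\mu^{q_r}}$/D/1 queue, and Lemma~\ref{lem:ui} applied to that single queue gives the uniform exponential bound with no extra assumption on the family $\{\mu^s\}$. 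You have all the ingredients in hand; redirecting the thinning coupling to domination and replacing the coalescence step by the characteristic-function identification would turn your sketch into the paper's proof.
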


\begin{re}
  \label{re:agg}

  We briefly discuss the hypothesis and the consequences of the above theorem.
  The uniqueness assumption on $\bar\pi\in\{\pi_{\mu^q}\}_{ q\in\mathcal{P}(\integer_+)}$  with mean $r$ is used to identify the weak limit in the statement of the theorem.
  In the context of conservative particles systems the finite volume ergodic measures are usually parametrized by the mean occupation number.
  Thus it is rather natural to expect that the same holds for the infinite volume limit.

  For the applications of Section~\ref{sec:app} Condition~\ref{con:3} can be verified by computing and recognizing the characteristic function of the random sum $\sum_{k=1}^{B^q}X_k$ while the uniqueness assumption on $\bar\pi$ can be verified using \eqref{eq:cf2} and Condition~\ref{con:2}.
  We omit these computations for brevity.
    However, as $\bar\pi$ is the only measure with mean $r$ which has characteristic function given by \eqref{eq:cf1}, it is possible to recover its expression by using equations \eqref{eq:cf1}, \eqref{eq:cf2} and inverting the characteristic function.
  For the Fermi-Dirac case we obtain a Bernoulli measure, for the Maxwell-Boltzmann case the stationary measure of the M/D/1 queue (see Example~6.4 of \cite{Sh:Th:Gr:Ha}) while for the Bose-Einstein case the geometric distribution supported on $\integer_+$.

\end{re}

\begin{proofof}{Theorem \ref{teo:st}}
  We first observe that by Lemma~\ref{le:r} we have $\EE(\eta(t))=r$ for any $t\geq0$ and this, via Markov inequality, implies the tightness of the sequence of distributions of $(\eta(t))_{t\geq0}$.
  Furthermore, denoting with $q(t)$ the distribution of $\eta(t)$, by equation \eqref{eq:nl1} we have, for any $x\in\real$,
    \begin{equation*}
      \begin{split}
            \EE\left(e^{ix\eta(t+1)}\right)&=
    \EE\left[\EE\left(e^{ix\eta(t+1)}|\eta(t)\right)\right]
    =\EE\left[e^{ix(\eta(t)-\ind(\eta(t)>0))}\EE\left(e^{ix B^{q(t)}}|\eta(t)\right)\right]\\
       &=\hat\mu^{q(t)}(x)\EE\left(e^{ix(\eta(t)-\ind(\eta(t)>0))}\right).
      \end{split}
  \end{equation*}
  By tightness we can choose a subsequence $(\eta(\bar t))_{\bar t\geq0}$ of $(\eta(t))_{t\geq0}$ with weak limit point $\bar\eta$.
  Taking the limit, for $\bar t\to+\infty$, in the previous equation we get:
  \begin{equation}
    \label{eq:cf}
    \EE\left(e^{ix\bar\eta}\right)=
     \hat\mu^{\bar q}(x)\EE\left(e^{ix(\bar\eta-\ind(\bar\eta>0))}\right),
  \end{equation}
  where $\bar q$ is the distribution of $\bar\eta$.
  Observe that
  \begin{equation*}
    \begin{split}
          \EE\left(e^{ix (\bar\eta-\ind(\bar\eta>0))}\right)&=
    \EE\left(e^{ix \bar\eta},\bar\eta=0\right)+ e^{-ix}\EE\left(e^{ix \bar\eta},\bar\eta>0\right)\\
    &=\bar q(0)+e^{-ix}\left[\EE\left(e^{ix \bar\eta}\right)-\bar q(0)\right].
    \end{split}
  \end{equation*}
  Plugging this expression in the right hand side of equation \eqref{eq:cf} and solving it we get
  \begin{equation*}
    \EE\left(e^{ix \bar\eta}\right)
    =\frac{\bar q(0) (e^{ix}-1) \hat\mu^{\bar q}(x)}{e^{ix}-\hat\mu^{\bar q}(x)}.
  \end{equation*}
  Taking the limit for $x\to0$ in the above equation we obtain that $\bar q(0) =1-m_{\mu^{\bar q}}$.
  Thus,  by Theorem~\ref{teo:md1}, the limit points of the distributions of $(\eta(t))_{t\geq0}$ belong to
  \begin{equation*}
    \{\pi_{\mu^q}\}_{ q\in\mathcal{P}(\integer_+)}.
  \end{equation*}
  As, by hypothesis, there is only one element in $\{\pi_{\mu^q}\}_{ q\in\mathcal{P}(\integer_+)}$ with mean $r$,  
  to prove the uniqueness  of the limit it is enough to show that
  \begin{equation}
    \label{eq:ui}
    \EE(\bar \eta) =\lim_{\bar t\to+\infty}\EE(\eta(\bar t))=r,
  \end{equation}
  by proving uniform integrability of the sequence $(\eta(t))_{t\geq0}$ (see for example Theorem 25.11 of \cite{Bi}).
  In fact the nonlinear process $(\eta(t))_{t\geq0}$ can be coupled with a G$_{\mu^{q_r}}$/D/1 queue $(\zeta(t))_{t\geq0}$ with $q_r(\{0\})=1-r$, so that $\PP(\eta(t)\leq \zeta(t))=1$ for any $t\geq0$, and uniform integrability of $(\eta(t))_{t\geq0}$ will follow by uniform integrability of $(\zeta(t))_{t\geq0}$.
  Observe that
  \begin{equation*}
    \PP(\eta(t)>0)\leq
    \EE(\eta(t))=r.
  \end{equation*}
  Take a sequence of \iid $\{B_t^{q_r}\}_{t\geq0}$, distributed accordingly with $\mu^{q_r}$ and for any $t>0$ take and a sequence of \iid Bernoulli random variables $Y_{1\, t},Y_{2\, t},\dots$ with parameter $\PP(\eta(t)>0)/r$ such that sequences with different $t$ are independent and independent from each $B_t^{q_r}$.
 Now define
 \begin{equation*}
   B_t:=
   \sum_{k=1}^{B_t^{q_r}}Y_{k\,t}.
 \end{equation*}
 Then $\{B_t\}_{t\geq0}$ are independent random variables and by Condition~\ref{con:3} have distribution belonging to $\{\mu^q\}_{q\in\mathcal{P}(\integer_+)}$.
 By Condition~\ref{con:2} and Condition~\ref{con:3} $B_t\sim\mu^{q(t)}$ because $\EE(B_t)=\PP(\eta(t)>0)$.
 This implies $B_t\leq B_t^{q_r}$ \as for any $t\geq0$ so that if $\zeta(0)=\eta(0)$ and define
 \begin{align*}
   \eta(t+1)&:=\eta(t)-\ind(\eta(t)>0)+B_t\\
   \zeta(t+1)&:=\zeta(t)-\ind(\zeta(t)>0)+B_t^{q_r}
 \end{align*}
 we have that $\eta(t)\leq\zeta(t)$ \as for any $t>0$.
 To obtain uniform integrability of the nonlinear process observe that by Lemma~\ref{lem:ui},  taking $\lambda>0$ small enough,
 \begin{equation*}
   \EE(e^{\lambda\eta(t)})
   \leq\sum_\eta\PP(\eta(0)=\eta) \EE_\eta(e^{\lambda\zeta(t)})
   \leq C_r \EE(e^{\lambda\eta(0)})<+\infty.
 \end{equation*}
\end{proofof}

\newpage

\end{document}